\documentclass[11pt]{article}


\usepackage{pgf,acadpgf}
\usepackage{booktabs}
\usepackage{amssymb}
\usepackage{amsmath}
\usepackage{color}
\usepackage{rotating}
\usepackage{theorem}
\usepackage{tikz-cd}


\numberwithin{equation}{section}

\theorembodyfont{\slshape}

  \newtheorem{THM}{Theorem}[section]
  \newtheorem{LEM}[THM]{Lemma}
  \newtheorem{PROP}[THM]{Proposition}
  \newtheorem{COR}[THM]{Corollary}

\theorembodyfont{\rmfamily}

  \newtheorem{DEF}[THM]{Definition}
  \newtheorem{EX}[THM]{Example}

\newif\ifQEDsign
\newcommand{\QED}{\global\QEDsigntrue\hfill$\square$}

\newenvironment{proof}%
    {\par\noindent\textit{Proof.}\global\QEDsignfalse}%
    {\ifQEDsign\else\QED\fi\par\bigskip\par}


\newcommand{\alex}{\mathrel{<_{\mathit{alex}}}}

\newcommand{\leT}{\mathrel{\le_{T}}}
\newcommand{\ltT}{\mathrel{<_{T}}}
\newcommand{\equivT}{\mathrel{\equiv_{T}}}
\newcommand{\geT}{\mathrel{\ge_{T}}}

\renewcommand{\le}{\leqslant}
\renewcommand{\ge}{\geqslant}

\newcommand{\0}{\varnothing}

\renewcommand{\phi}{\varphi}
\renewcommand{\epsilon}{\varepsilon}

\newcommand{\AAA}{\mathbf{A}}
\newcommand{\BB}{\mathbf{B}}
\newcommand{\CC}{\mathbf{C}}

\newcommand{\DD}{\mathbf{D}}

\newcommand{\JJ}{\mathbf{J}}
\newcommand{\KK}{\mathbf{K}}

\newcommand{\NN}{\mathbb{N}}

\newcommand{\RR}{\mathbb{R}}
\renewcommand{\SS}{\mathbf{S}}

\newcommand{\union}{\cup}

\newcommand{\restr}[2]{\hbox{$#1$}\hbox{$|$}_{#2}}

\newcommand{\Boxed}[1]{\mbox{$#1$}}

\newcommand{\id}{\mathrm{id}}

\newcommand{\ID}{\mathrm{ID}}
\newcommand{\Ob}{\mathrm{Ob}}

\newcommand{\LO}{\mathrm{LO}}

\newcommand{\RSurj}{\mathrm{RSurj}}

\newcommand{\op}{\mathrm{op}}

\newcommand{\Age}{\mathrm{Age}}

\newcommand{\iso}{\mathrm{iso}}

\newcommand{\bbF}{\mathbb{F}}

\newcommand{\calA}{\mathcal{A}}
\newcommand{\calB}{\mathcal{B}}
\newcommand{\calC}{\mathcal{C}}
\newcommand{\calD}{\mathcal{D}}

\newcommand{\calF}{\mathcal{F}}

\newcommand{\Fraisse}{Fra\"\i ss\'e}

\newcommand{\Flim}{\mathrm{Flim}}
\newcommand{\Aut}{\mathrm{Aut}}

\newcommand{\GR}{\mathbf{GR}}

\newcommand{\catR}{\mathbf{Ram}}
\newcommand{\catD}{\mathbf{DRam}}
\newcommand{\catG}{\mathbf{Gra}}
\newcommand{\catH}{\mathbf{H}}
\newcommand{\catP}{\mathbf{Pos}}
\newcommand{\catM}{\mathbf{Met}}
\newcommand{\catRel}{\mathbf{Rel}}
\newcommand{\catVec}{\mathbf{Vec}}
\newcommand{\catIR}{\mathbf{InfRam}}

\title{Tukey reducibility for categories -- In search of the strongest statement in finite Ramsey theory}
\author{%
  Keegan Dasilva Barbosa\\
  Fields Institute for Research in Mathematical Science,\\
  222 College St, Toronto, ON M5T 3J1, Canada,\\
  email: keegan.dasilvabarbosa@mail.utoronto.ca
\and
  Dragan Ma\v sulovi\'c\\
  Department of Mathematics and Informatics\\
  Faculty of Sciences, University of Novi Sad, Serbia\\
  email: dragan.masulovic@dmi.uns.ac.rs%
}

\begin{document}

\maketitle

\begin{abstract}
  Every statement of the Ramsey theory of finite structures corresponds
  to the fact that a particular category has the Ramsey property. We can, then, compare the strength of Ramsey
  statements by comparing the ``Ramsey strength'' of the corresponding categories.
  The main thesis of this paper is that establishing pre-adjunctions between pairs of categories is
  an appropriate way of comparing their ``Ramsey strength''. What comes as a pleasant surprise is that
  pre-adjunctions generalize the Tukey reducibility in the same way categories generalize preorders.
  In this paper we set forth a classification program of statements of finite Ramsey theory based on their
  relationship with respect to this generalized notion of Tukey reducibility for categories.
  After identifying the ``weakest'' Ramsey category,
  we prove that the Finite Dual Ramsey Theorem is as powerful as the full-blown version of the Graham-Rothschild Theorem,
  and conclude the paper with the hypothesis that the Finite Dual Ramsey Theorem is the ``strongest'' of all finite Ramsey statements.

  \medskip

  \noindent\textbf{Key Words and Phrases:} Finite Ramsey theory, Tukey reducibility, category theory

  \medskip

  \noindent\textbf{Mathematics Subject Classification 2020:} 05C55, 18A99
\end{abstract}

\section{Introduction}

There is a general feeling that almost every statement in finite Ramsey theory
follows from the Graham-Rothschild Theorem. For example,
Pr\"omel and Voigt write~\cite{promel-voigt-GRPS}:
\begin{quote}
  ``\ldots\ as it turns out, Ramsey's theorem itself is an immediate consequence of
  the Graham-Rothschild theorem. But the concept of parameter sets does not only glue
  arithmetic progressions and finite sets together. Also, it provides a natural
  framework for seemingly different structures like Boolean lattices, partition lattices,
  hypergraphs and Deuber's $(m,p,c)$-sets, just to mention a few. \emph{So, the Graham-Rothschild
  theorem can be viewed as a starting point of Ramsey Theory.}\footnote{emphasis by D.M.}''
\end{quote}
The Graham-Rothschild Theorem (see Example~\ref{p-a-t.ex.2})
is a family of Ramsey statements indexed by pairs $(A, G)$
where $A$ is a finite alphabet an $G$ is a finite group acting on $A$. The proof was first announced in~\cite{GR} as a main
technical step towards the proof of a conjecture by Rota that an analog of the Finite Ramsey Theorem
holds for finitely dimensional vector spaces over a finite field (we shall refer to this statement as \emph{Rota's Ramsey Conjecture}).
The complete proof was published a year later by Graham, Leeb and Rothschild in~\cite{GLR}, and this was one of the
first applications of category theory in finite Ramsey theory.

Comparing the strength of two mathematical statements is easy: a statement $\alpha$ is stronger than
a statement $\beta$ if $\alpha \Rightarrow \beta$. But how does one show that a statement
(such as the Graham-Rothschild Theorem) is \emph{the strongest} statement in a living body of knowledge
(such as the finite Ramsey theory) whose boundaries are vague (what is the \emph{exact list} of statements
of Ramsey theory?), and new statements are being added on daily basis?
There are well-established and deep mathematical disciplines that deal with metaresults of this kind,
such as reverse mathematics and proof theory. But how does one compare the strength of mathematical statements if
one happily accepts the full force of every-day mathematical practice
where the Axiom of Choice is a \emph{condicio sine qua non}, and careful analysis of formal proofs is not a feasible option?

Our starting point is the observation that almost every statement of the Ramsey theory of finite structures
corresponds to the fact that a particular category has the Ramsey property (see Table~\ref{p-a-t.fig.statements-categories}
for a few examples). We can, then, compare the strength of Ramsey statements by comparing the
``Ramsey strength'' of the corresponding categories.
Moreover, we can describe precisely (see Definition~\ref{p-a-t.def.ramsey-cat-of-fin-structs})
the class of categories in which the Ramsey theory of finite structures
resides, and then ask ``What is the `strongest' category in this class?''

\begin{table}
  \centering
  \begin{tabular}{lc}
    \toprule
    Finite Ramsey statement & Category \\
    \midrule
    Finite Ramsey Theorem & $\catR$ \\
    Graham-Rothschild Theorem & $\GR(A, X, G)$\\
    Finite Dual Ramsey Theorem & $\catD^\op$\\
    Rota's Ramsey Conjecture & $\catVec(\bbF)$\\
    Ne\v set\v ril-R\"odl Theorem & $\catRel(L)$\\
    Ramsey property for: \\
    \quad finite ordered graphs & $\catG$\\
    \quad finite $k$-uniform hypergraphs & $\catH(k)$\\
    \quad finite partial orders with linear extensions & $\catP$\\
    \quad finite ordered $S$-metric spaces & $\catM(S)$\\
    \bottomrule
  \end{tabular}
  \caption{Finite Ramsey statements and the corresponding categories
  (see Examples~\ref{p-a-t.ex.1}--\ref{p-a-t.ex.5})}
  \label{p-a-t.fig.statements-categories}
\end{table}

Proving directly that a class of structures has the Ramsey property is usually a laborious task
based on complex combinatorial constructions. Another way of proving Ramsey results
is to start from a context where the Ramsey property has already been established and try to
transfer the results to the context we are interested in.
This strategy was successfully employed by Pr\"omel and Voigt already in~1981 in~\cite{promel-voigt-GRA} where the Ramsey property for finite
ordered graphs was proved by reducing it to the Graham-Rothschild Theorem. This proof was simplified
and the strategy made more accessible in the book \cite{Promel-Book} published in~2013.
In the same year in his paper~\cite{Solecki-Abstract} Solecki introduces an abstract setting
in which a wide variety of classical Ramsey-type results can be proved, and proposes
the notion of interpretability which enables transferring of the Ramsey property between two such abstract settings.
When it comes to modeling the Ramsey-related phenomena in the language of category theory,
it was shown in \cite{masul-preadj} that pre-adjunctions (see Definition~\ref{opos.def.PA})
transport the Ramsey property. This notion is motivated by the careful analysis of
the version of Pr\"omel and Voigt's 1981 proof presented in \cite[Theorem 12.13]{Promel-Book}
where (although not in the language of category theory) a pre-adjunction
from $\catG$ to $\GR(\{0\}, X, \{e\})$ is constructed.
In a recent paper~\cite{Solecki-FRT-CT} Solecki proposed the notion of modeling
\cite[Section 3.3]{Solecki-FRT-CT} which generalizes both the notion of interpretability and the
notion of pre-adjunctions.

The main thesis of this paper is that establishing a pre-adjunction between a pair of categories is
an appropriate way of comparing their ``Ramsey strength,'' not only because pre-adjunctions are able to
transfer all sorts of Ramsey-related phenomena\footnote{partition relation, Ramsey property, small Ramsey degrees,
see e.g.\ \cite{masul-rpppg}} (Solecki's modeling can do all that as well), but because
pre-adjunctions generalize the Tukey reducibility in the same way categories generalize preorders:
\begin{center}
  \begin{tabular}{ccc}
    preorders & $\leadsto$ & Tukey reducibility\\
    $\downarrow$ & & $\downarrow$\\
    categories & $\leadsto$ & pre-adjunctions
  \end{tabular}
\end{center}
Although Tukey reducibility was introduced in \cite{tukey} with the intention to better understand
intricacies of convergence in topology, it has become a very handy tool 
in many other contexts in which some kind of ordering is imposed on the objects under scrutiny
(see e.g.~\cite{dobrinen-survey-tukey,solecki-survey-tukey}).
Detailed analysis of Tukey reducibility in a class of structures often leads to rough classification results
which are invaluable when there are too many isomorphism classes for a
human-readable classification modulo isomorphism. Rough classification of Ramsey categories
may lead to new insights into the profound nature of this formidable combinatorial phenomenon.

We see all this as yet another benefit of looking at Ramsey theory through the lens of category theory.
Category theory not only helps with proving new Ramsey-type results (see for example~\cite{masul-preadj,masul-big-v-small,masul-drpca}),
but also provides us with both the language and the tools to formulate and
formally reason about metaresults of Ramsey theory. For example, one of the results
of this paper shows that the Finite Dual Ramsey Theorem, the most ascetic rendering of the Graham-Rothschild Theorem,
is as powerful as the full-blown version $\GR(A, X, G)$, the obvious candidate for ``the strongest'' of them all.
The explicit and constructive nature of the reductions we use to prove this are
a clear demonstration that, at a small extra cost, Rota's Ramsey Conjecture could have been proved
directly from the Finite Dual Ramsey Theorem, and this was done in 2022 by Barto\v sov\'a, Lopez-Abad, Lupini
and Mbombo~\cite{Bartosova-LopezAbad-Lupini-Mbombo}.
It turns out that relying on finite alphabets and finite groups acting on them to model a context
using the Graham-Rothschild Theorem is just a convenience, not a necessity.
It is important to note, however, that the research community became aware of the
dual Ramsey phenomena, including the Finite Dual Ramsey Theorem, in the early 1980's, some ten years after the
first proof of the Graham-Rothschild Theorem and and Rota's Ramsey Conjecture with it.

The paper is organized as follows. In Section~\ref{p-a-t.sec.prelim} we recall some basic facts
about Tukey reducibility of directed preorders. We then present fundamental
notions of Ramsey theory in the language of category theory and through a sequence of examples introduce
several concrete categories that we shall use to sharpen our tools.
Section~\ref{p-a-t.sec.pre-adj-tukey} is devoted to showing that
pre-adjunctions generalize the Tukey reducibility in the same way categories generalize preorders.
Section~\ref{p-a-t.sec.weakest} then identifies the bottom element in the Tukey ordering of Ramsey categories.
It comes as no surprise that posets, being the categories where the Ramsey property is trivial,
should be the weakest Ramsey categories. However, this fact provides no insight into the mutual relationship of
``proper'' Ramsey statements. We then show that the category $\catR$ which encodes the Finite Ramsey Theorem
is the weakest amongst the most significant classes of categories of finite structures.
In Section~\ref{p-a-t.sec.GR-all-the-same} we show that all the instances of the Graham-Rothschild Theorem
are of the same strength. We conclude the paper with Section~\ref{p-a-t.sec.conclusion} where several
future research directions are indicated.

\section{Preliminaries}
\label{p-a-t.sec.prelim}

\paragraph{Preorders.}
A \emph{preorder} is a set $A$ together with reflexive and transitive relation $\le$.
If $A$ is a preorder, we say that $a, b \in A$ are \emph{equivalent} if $a \le b$ and $b \le a$. We then write
$a \equiv b$. Note that $\equiv$ is an equivalence relation and that $A / \Boxed\equiv$ becomes a partial order if we
order the classes of $\equiv$ so that $[a]_\equiv \le [b]_\equiv$ if and only if $a \le b$ in~$A$.
We shall say that a preorder $A$ is \emph{essentially finite (resp.\ countable)} if $A / \Boxed\equiv$ is finite (resp.\ countable).
A nonempty set $X \subseteq A$ is \emph{bounded (from above)} if there is a $b \in A$ such that $x \le b$ for all $x \in X$.
We then write $X \le b$.
A preorder $A$ is \emph{directed} if every two-element subset $\{x, y\} \subseteq A$ is bounded.
A nonempty set $X \subseteq A$ is \emph{cofinal (in $A$)} if for every $a \in A$ there is an $x \in X$ such that $a \le x$.

\paragraph{Tukey reducibility.}
Let $A$ and $B$ be directed preorders. A map $f : A \to B$ is a \emph{Tukey map} if it is \emph{unbounded} in the following sense:
for every $X \subseteq A$ which is unbounded in $A$ the image $f(X) = \{f(x) : x \in X\} \subseteq B$ is unbounded in~$B$.
A map $f : A \to B$ is \emph{cofinal} if for every $X \subseteq A$ which is cofinal in $A$
the image $f(X) \subseteq B$ is cofinal in~$B$.

\begin{THM}\cite{tukey,schmidt}\label{p-a-t.thm.tukey-fg}
  Let $A$ and $B$ be directed preorders. If there is a Tukey map $f : A \to B$ then there is a cofinal map $g : B \to A$
  such that for all $a \in A$ and $b \in B$:
  $$
    f(a) \mathrel{\le^B} b \Rightarrow a \mathrel{\le^A} g(b).
  $$
\end{THM}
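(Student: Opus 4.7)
The plan is to define, for each $b \in B$, the value $g(b)$ to be an upper bound in $A$ of the preimage set $S_b = \{a \in A : f(a) \le b\}$; with $g$ so defined, the desired implication $f(a) \le b \Rightarrow a \le g(b)$ falls out immediately from the construction, and the cofinality of $g$ follows by a brief chase. So the entire proof reduces to showing that each $S_b$ is bounded in $A$.

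The main (and essentially only) nontrivial step is precisely this: $S_b$ is bounded in $A$. I would argue by contradiction. If $S_b$ were unbounded in $A$, then because $f$ is a Tukey map the image $f(S_b) \subseteq B$ would be unbounded in $B$. But by the very definition of $S_b$, every element of $f(S_b)$ lies below $b$, making $f(S_b)$ bounded by $b$ itself --- a contradiction. Having established that $S_b$ is bounded, pick $g(b) \in A$ with $S_b \le g(b)$ (and if $S_b = \emptyset$, let $g(b)$ be an arbitrary element of $A$); in the general setting this selection invokes the Axiom of Choice.

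With $g$ so defined, the required implication is immediate: if $f(a) \le b$ then $a \in S_b$, and therefore $a \le g(b)$ by the choice of $g(b)$ as an upper bound of $S_b$. To finish, I would verify that $g$ is cofinal as follows. Let $X \subseteq B$ be cofinal and let $a \in A$ be arbitrary. Then $f(a) \in B$ admits some $x \in X$ with $f(a) \le x$, because $X$ is cofinal; the implication just proved then yields $a \le g(x)$, so $g(X)$ is cofinal in $A$.

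I do not anticipate any serious obstacle. The whole content of the theorem is the clean observation that the Tukey (unbounded) condition on $f$ forces each lower fiber $S_b$ to be bounded in $A$; after that, both the witnessing implication and the cofinality of $g$ are straightforward bookkeeping, and the argument goes through uniformly regardless of the cardinalities of $A$ and $B$ so long as one is willing to use choice to select the upper bounds $g(b)$.
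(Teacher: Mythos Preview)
The paper does not supply its own proof of this theorem; it is quoted as a classical result with a citation to Tukey and Schmidt, and no proof environment follows the statement. Your argument is precisely the standard proof of this fact (define $g(b)$ as an upper bound of $\{a \in A : f(a) \le b\}$, which is bounded by the contrapositive of the Tukey condition), and it is correct.
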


A directed preorder $A$ is \emph{Tukey reducible} to a directed preorder $B$,
in symbols $A \leT B$, if there is a Tukey map $A \to B$.
We write $A \equivT B$ when $A \leT B$ and $B \leT A$ and say that $A$ and $B$ are \emph{Tukey equivalent}.
It is a well-known fact that every essentially countable directed preorder is Tukey equivalent to~1 or~$\omega$.

The following technical statement will be needed later. A map $f : A \to B$ between two preorders is
\emph{monotone} if $x \le y \Rightarrow f(x) \le f(y)$ for all $x, y \in A$.
For an element $a$ of a preorder $A$ let $\langle a] = \{x \in A : x \le a\}$.

\begin{LEM}\label{p-a-t.lem.monotone-tukey}
  Let $A$ and $B$ be essentially countable directed preorders and assume that $A$ is not bounded.
  If there is a Tukey map $A \to B$ then there is a monotone Tukey map $A \to B$.
\end{LEM}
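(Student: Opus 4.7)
The plan is to replace $f$ by a map that factors through a cofinal chain of $A$ into a monotone chain of $B$, so that monotonicity comes for free and the Tukey property is inherited. Because $A$ is essentially countable, directed, and unbounded, its $\equiv$-classes form a countably infinite set: a finite directed partial order has a maximum and would force $A$ to be bounded. Pick representatives $a_0, a_1, \ldots$ of the $\equiv$-classes and inductively build a chain $c_0 \le c_1 \le \ldots$ in $A$ with $a_n \le c_n$, by setting $c_0 = a_0$ and letting $c_{n+1}$ be any upper bound of $c_n$ and $a_{n+1}$ (directedness). This chain is cofinal in $A$, and it has no upper bound in $A$ (one would bound all $a_n$, hence $A$). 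Consequently, $n(a) = \min\{k : a \le c_k\}$ is well-defined on $A$ and monotone: if $a \le a'$ then $a \le a' \le c_{n(a')}$, so $n(a) \le n(a')$ by minimality.

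Next I build a monotone sequence $b_0 \le b_1 \le \ldots$ in $B$ dominating the values $f(c_n)$: put $b_0 = f(c_0)$ and let $b_{n+1}$ be an upper bound in $B$ of $b_n$ and $f(c_{n+1})$, which exists because $B$ is directed. (Note in passing that $B$ must be unbounded; otherwise the image of $f$ would be bounded while $A$ is not, contradicting the Tukey condition.) Define $g : A \to B$ by $g(a) = b_{n(a)}$. Monotonicity of $g$ is immediate from monotonicity of the two factors $a \mapsto n(a)$ and $n \mapsto b_n$.

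The content of the proof is checking that $g$ is still a Tukey map. Suppose $X \subseteq A$ with $g(X) \le b^*$ for some $b^* \in B$. For every $x \in X$ the construction gives
\[
  f(c_{n(x)}) \le b_{n(x)} = g(x) \le b^*,
\]
so $\{c_{n(x)} : x \in X\} \subseteq f^{-1}(\langle b^*])$. Because $f$ is Tukey, the preimage $f^{-1}(\langle b^*])$ must be bounded in $A$: an unbounded subset of it would be mapped into the bounded set $\langle b^*]$, contradicting the defining property of a Tukey map. So there exists $a^* \in A$ with $c_{n(x)} \le a^*$ for every $x \in X$, and then $x \le c_{n(x)} \le a^*$ yields $X \le a^*$. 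Contrapositively, if $X$ is unbounded in $A$ then $g(X)$ is unbounded in $B$.

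The delicate point, and the main obstacle, is the choice that $g(x)$ should dominate $f(c_{n(x)})$ rather than $f(x)$ itself: a monotone $g$ can only ``see'' $x$ through its position in the chain, so it cannot in general dominate $f(x)$. What makes the argument go through is the inequality $x \le c_{n(x)}$ built into the definition of $n$, which lets a bound on the witnesses $c_{n(x)}$ be read off as a bound on $X$ and thereby transports the Tukey property from $f$ to $g$.
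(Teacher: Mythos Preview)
Your proof is correct and takes essentially the same approach as the paper: both build a cofinal chain $(c_n)$ in $A$, send each $a$ to $b_{n(a)}$ where $(b_n)$ is a monotone chain in $B$ dominating $f(c_n)$, and recover the Tukey property from the key inequality $x \le c_{n(x)}$. The only cosmetic differences are that the paper phrases the level sets as an explicit partition $S_n = \langle s_n] \setminus \bigcup_{k<n} S_k$ rather than via your rank function $n(a) = \min\{k : a \le c_k\}$, and leaves the Tukey verification you spell out to the reader.
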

\begin{proof}
  Let us enumerate $A / \Boxed\equiv$ as $\{[a_0]_\equiv, [a_1]_\equiv, [a_2]_\equiv, \ldots\}$
  and let us define $s_i \in A$ and $S_i \subseteq A$, $i \ge 0$, inductively as follows.
  To start the induction let $s_0 = a_0$ and $S_0 = \langle s_0]$.
  Assume that $s_0, \ldots, s_{n-1}$ and $S_0, \ldots, S_{n-1}$ have been constructed.
  Let $j_n = \min\{i \in \omega : a_i \notin S_0 \cup \ldots \cup S_{n-1}\}$ (note that $j_n$ is always
  well-defined because $A$ is not bounded), let $s_n$ be any upper bound for $s_{n-1}$ and $a_{j_n}$ and let
  $S_n = \langle s_n] \setminus (S_0 \cup \ldots \cup S_{n-1})$. Note that:
  \begin{itemize}
    \item $s_0 < s_1 < s_2 < \ldots$,
    \item $\{S_n : n \in \omega\}$ is a partition of $A$, and
    \item if $x \in S_i$, $y \in S_j$ and $x \le y$ then $i \le j$.
  \end{itemize}
  By the assumption, there is a Tukey map $f : A \to B$. Let us construct $\hat f : A \to B$ inductively as follows.
  Put $b_0 = f(s_0)$ and then define $\hat f$ on $S_0$ so that $\hat f(S_0) = \{b_0\}$.
  With $b_{n-1}$ defined, take $b_n$ to be any upper bound
  of $b_{n-1}$ and $f(s_n)$, and define $\hat f$ on $S_n$ so that $\hat f(S_n) = \{b_n\}$.
  If is now easy to verify that $\hat f$ is a well-defined mapping $A \to B$ which is Tukey and monotone.
\end{proof}

\paragraph{Relational structures.}
A \emph{relational language} is a set $L = \{R_i : i \in I\}$ of \emph{relational symbols} where
each $R_i$ comes with its own \emph{arity} $r_i \in \NN$, $i \in I$.
An \emph{$L$-structure} (or a \emph{relational structure} if making $L$ explicit is not relevant)
is a structure $\calA = (A, R^A_i)_{i \in I}$ where $A$ is a set and $R^A_i \subseteq A^{r_i}$ is a relation on $A$ of
arity $r_i$. If $B \subseteq A$ is a set of elements of~$A$ then $\restr \calA B = (B, R^A_i \cap B^{r_i})_{i \in I}$
is the \emph{substructure of $\calA$ induced by $B$}. A mapping $f : A \to B$ is an embedding
of an $L$-structure $\calA$ into an $L$-structure $\calB$, in symbols $f : \calA \hookrightarrow \calB$,
if the following holds for every $i \in I$ and all $a_1, \ldots, a_{r_i} \in A$:
$$
  (a_1, \ldots, a_{r_i}) \in R^A_i \Longleftrightarrow (f(a_1), \ldots, f(a_{r_i})) \in R^B_i.
$$
By $\calA \hookrightarrow \calB$ we indicate that there is an embedding of $\calA$ into $\calB$.

Let us recall some of the basic facts about \Fraisse\ theory~\cite{Fraisse-ThRel}
and the Kechris-Pestov-Todor\v cevi\'c correspondence~\cite{KPT}.
For a countable relational structure $\calF$, the class of all finite substructures of $\calF$
is called the \emph{age} of $\calF$ and we denote it by~$\Age(\calF)$. A class $\KK$ of finite
relational structures is an \emph{age} if there is countable relational structure $\calF$ such that
$\KK = \Age(\calF)$. A class $\KK$ of finite relational structures is an age if and only if
$\KK$ is closed for isomorphisms,
there are at most countably many pairwise nonisomorphic structures in $\KK$,
$\KK$ has the \emph{hereditary property} (if $\calA \in \KK$ and $\calB \hookrightarrow \calA$ then $\calB \in \KK$),
and $\KK$ is directed (for all $\calA, \calB \in \KK$ there is a $\calC \in \KK$ such that $\calA \hookrightarrow \calC$
and $\calB \hookrightarrow \calC$).

An age $\KK$ is a \emph{\Fraisse\ age} (= \Fraisse\ class = amalgamation class) if $\KK$ satisfies the
\emph{amalgamation property}: for all $\calA, \calB, \calC \in \KK$ and embeddings $f : \calA \hookrightarrow \calB$ and
$g : \calA \hookrightarrow \calC$ there exist $\calD \in \KK$ and embeddings $f' : \calB \hookrightarrow \calD$ and
$g' : \calC \hookrightarrow \calD$ such that $f' \circ f = g' \circ g$.
For every \Fraisse\ age $\KK$ there is a unique (up to isomorphism) countable ultrahomogeneous structure $\calF$
such that $\KK = \Age(\calF)$. We say that $\calF$ is the \emph{\Fraisse\ limit} of $\KK$, denoted $\Flim(\KK)$.
Recall that a structure $\calF$ is \emph{ultrahomogeneous} for every $A \in \Age(\calF)$ and any pair of
embeddings $f, g : \calA \hookrightarrow \calF$ there is a $\phi \in \Aut(\calF)$ such that $\phi \circ f = g$.

If $\KK$ is a Ramsey class of finite relational structures which is directed and closed under
isomorphisms and taking substructures then $\KK$ is a \Fraisse\ age~\cite{Nesetril}.
In that case we say that $\KK$ is a \emph{Ramsey age}. So, every Ramsey age is a \Fraisse\ age.

A topological group $G$ is \emph{extremely amenable}
if every continuous action $G \curvearrowright X$ on a compact Hausdroff space $X$ has a joint fixed point,
that is, there is an $x_0 \in X$ such that $g \cdot x_0 = x_0$ for all $g \in G$.
One of the many deep results of \cite{KPT} is the following statement. Let $\KK$ be a \Fraisse\ age
and $F$ its \Fraisse\ limit. Then $\KK$ has the Ramsey property if and only if $\Aut(F)$ is extremely amenable.

\paragraph{Categories.}
In order to specify a \emph{category} $\CC$ one has to specify
a class of objects $\Ob(\CC)$, a class of morphisms $\hom_\CC(A, B)$ for all $A, B \in \Ob(\CC)$,
the identity morphism $\id_A$ for all $A \in \Ob(\CC)$, and
the composition of morphisms~$\cdot$~so that
$\id_B \cdot f = f = f \cdot \id_A$ for all $f \in \hom_\CC(A, B)$, and
$(f \cdot g) \cdot h = f \cdot (g \cdot h)$ whenever the compositions are defined.
If $f \in \hom_\CC(A, B)$ then we say that $A$ is the \emph{domain} and $B$ the \emph{codomain} of~$f$.
A category $\CC$ is \emph{locally small} if $\hom_\CC(A, B)$ is a set for all $A, B \in \Ob(\CC)$.
Sets of the form $\hom_\CC(A, B)$ are then referred to as \emph{hom-sets}.

\begin{EX}
  Every class of first-order structures can be understood as a locally small category whose morphisms are embeddings
  of first-order structures. This is the intended interpretation whenever a class of first-order structures is treated as a category
  and the morphisms are not specified.
\end{EX}

Write $A \to B$ if $\hom_\CC(A, B) \ne \0$. 
A locally small category $\CC$ is \emph{small} if $\Ob(\CC)$ is a set.
A category $\CC$ is \emph{thin} if $|\hom_\CC(A, B)| \le 1$ for all $A, B \in \Ob(\CC)$.

\begin{EX}
  If $\CC$ is a small thin category then $\to$ is a preorder on $\Ob(\CC)$.
  Conversely, every preorder $A$ can be thought of as a small thin category $\AAA$ where
  $\Ob(\AAA) = A$ and for $a, b \in A$ there is a unique morphism $a \to b$ if and only if $a \le b$.
  Consequently, preorders are exactly small thin categories.
\end{EX}

A category $\CC$ is \emph{directed} if for every $A, B \in \Ob(\CC)$ there is a $C \in \Ob(\CC)$ such that $A \to C$ and $B \to C$,
and it \emph{has amalgamation} if for all $A, B_1, B_2 \in \Ob(\CC)$ and morphisms $f_1 \in \hom_\CC(A, B_1)$, $f_2 \in \hom_\CC(A, B_2)$
there is a $C \in \Ob(\CC)$ and morphisms $g_1 \in \hom_\CC(B_1, C)$, $g_2 \in \hom_\CC(B_2, C)$ such that $g_1 \cdot f_1 = g_2 \cdot f_2$.

A morphism $f$ is: \emph{mono} or \emph{left cancellable} if
$f \cdot g = f \cdot h$ implies $g = h$ whenever the compositions make sense;
\emph{epi} or \emph{right cancellable} if
$g \cdot f = h \cdot f$ implies $g = h$ whenever the compositions make sense; and
\emph{invertible} if there is a morphism $g$ with the appropriate domain and codomain
such that $g \cdot f = \id$ and $f \cdot g = \id$.
By $\iso_\CC(A, B)$ we denote the set of all invertible
morphisms $A \to B$, and we write $A \cong B$ if $\iso_\CC(A, B) \ne \0$. Let $\Aut_\CC(A) = \iso_\CC(A, A)$.
An object $A \in \Ob(\CC)$ is \emph{rigid} if $\Aut_\CC(A) = \{\id_A\}$.

Given a category $\CC$, the \emph{opposite category} $\CC^\op$ is a category constructed from $\CC$ on the same class of objects
by formally reversing arrows and composition. More precisely, for $A, B \in \Ob(\CC) = \Ob(\CC^\op)$ we have that
$\hom_{\CC^\op}(A, B) = \hom_{\CC}(B, A)$, and for $f \in \hom_{\CC^\op}(A, B)$ and $g \in \hom_{\CC^\op}(B, C)$
we have that $g \mathbin{\underset{\CC^\op}{\cdot}} f = f \mathbin{\underset{\CC}{\cdot}} g$.

A category $\DD$ is a \emph{subcategory} of a category $\CC$ if $\Ob(\DD) \subseteq \Ob(\CC)$ and
$\hom_\DD(A, B) \subseteq \hom_\CC(A, B)$ for all $A, B \in \Ob(\DD)$.
A category $\DD$ is a \emph{full subcategory} of a category $\CC$ if $\Ob(\DD) \subseteq \Ob(\CC)$ and
$\hom_\DD(A, B) = \hom_\CC(A, B)$ for all $A, B \in \Ob(\DD)$.
A \emph{skeleton of $\CC$} is a full subcategory $\SS$ of $\CC$ such that every object of $\CC$ is isomorphic
to some object in $\SS$, and no two objects of $\SS$ are isomorphic. In other words,
$\SS$ contains exactly one representative of each isomorphism class of objects in~$\CC$.

A \emph{functor} $F : \CC \to \DD$ from a category $\CC$ to a category $\DD$ maps $\Ob(\CC)$ to
$\Ob(\DD)$ and maps morphisms of $\CC$ to morphisms of $\DD$ so that
$F(f) \in \hom_\DD(F(A), F(B))$ whenever $f \in \hom_\CC(A, B)$, $F(f \cdot g) = F(f) \cdot F(g)$ whenever
$f \cdot g$ is defined, and $F(\id_A) = \id_{F(A)}$.
A functor $F : \CC \to \CC$ such that $F(A) = A$ and $F(f) = f$ for all objects $A$ and morphisms $f$
is called the \emph{identity functor} and denoted by~$\ID_\CC$.
Categories $\CC$ and $\DD$ are \emph{isomorphic}, in symbols $\CC \cong \DD$, if there exist
functors $F : \CC \to \DD$ and $G : \DD \to \CC$ such that $G \circ F = \ID_\CC$ and $F \circ G = \ID_\DD$.

A functor $F : \CC \to \DD$ is \emph{full} if it is surjective on homsets (that is: for
every $g \in \hom_\DD(F(A), F(B))$ there is an $f \in \hom_\CC(A, B)$ with $F(f) = g$),
and \emph{faithful} if it is injective on homsets (that is: $F(f) = F(g)$ implies $f = g$).
A functor $F : \CC \to \DD$ is \emph{isomorphism-dense} if for every $D \in \Ob(\DD)$ there is
a $C \in \Ob(\CC)$ such that $F(C) \cong D$. A functor $F : \CC \to \DD$ is an \emph{equivalence}
if it is full, faithful and isomorphism-dense. Categories $\CC$ and $\DD$ are \emph{equivalent}
if there is an equivalence $F : \CC \to \DD$.

\paragraph{Ramsey theory in the language of category theory.}
Basic notions of Ramsey theory of finite structures generalize to locally small categories straightforwardly.
We write
$
  C \longrightarrow (B)^{A}_k
$
to denote that $A \to B \to C$ in $\CC$ and for every $k$-coloring
$
  \chi : \hom_\CC(A, C) \to k
$
there is a morphism $w \in \hom_\CC(B, C)$ such that $|\chi(w \cdot \hom_\CC(A, B))| = 1$.

A category $\CC$ has the \emph{Ramsey property}\footnote{this notion is sometimes referred to as the \emph{embedding Ramsey property},
to distinguish it from the structural Ramsey property where the coloring is applied to subobjects;
in this paper we focus on the embedding Ramsey property exclusively}
if for every integer $k \in \NN$ and all $A, B \in \Ob(\CC)$
such that $A \to B$ there is a $C \in \Ob(\CC)$ such that $C \longrightarrow (B)^{A}_k$.
A category $\CC$ has the \emph{dual Ramsey property} if $\CC^\op$ has the Ramsey property.

\begin{EX}
  Every thin category has the Ramsey property, and this is trivial.
  In particular, the linear order of nonnegative integers $(\omega, \Boxed\le)$
  understood as a thin category has the Ramsey property. We shall denote this category with $\omega$
  and rely on the context to parse the correct interpretation of the symbol (a set, a linear order, or a small thin category).
\end{EX}

\begin{EX}\label{p-a-t.ex.1}
  Let $\catR$ denote the category whose objects are finite chains (linearly ordered sets) and whose morphisms are
  injective monotone maps between them. The fact that $\catR$ has the Ramsey property is a reformulation
  of the Finite Ramsey Theorem:
  \begin{quote} 
    \textbf{Finite Ramsey Theorem.}
    For all positive integers $k$, $\ell$, $m$ there is a positive integer $n$ such that
    for every $n$-element set $C$ and every $k$-coloring of the set $[C]^\ell$ of all $\ell$-element subsets of $C$
    there is an $m$-element subset $B \subseteq C$ such that $[B]^\ell$ is monochromatic.
  \end{quote}
  For future reference let us also state the Infinite Ramsey Theorem~\cite{Ramsey}:
  \begin{quote} 
    \textbf{Infinite Ramsey Theorem.}
    Let $C$ be a countably infinite set. For all positive integers $k$, $\ell$ and for every $k$-coloring
    of the set $[C]^\ell$ of all $\ell$-element subsets of $C$
    there is an infinite subset $B \subseteq C$ such that $[B]^\ell$ is monochromatic.
  \end{quote}
\end{EX}

\begin{EX}\label{p-a-t.ex.2}
  A word $u$ of length $n \ge 1$ over $A$ can be thought of as
  an element of $A^n$ but also as a mapping $u : \{1, 2, \ldots, n\} \to A$. Then
  $u^{-1}(a)$, $a \in A$, denotes the set of all the positions in $u$ where $a$ appears.
  We usually write such words as $u = a_1 a_2 \ldots a_n$ and call them \emph{$n$-letter words} (over~$A$).

  For a finite set $G$, a \emph{$G$-decorated $n$-letter word over $A$} is an $n$-letter word over $A \times G$.
  Instead of $u = (a_1, g_1) \, (a_2, g_2) \, \ldots \, (a_n, g_n) \in (A \times G)^n$
  we will find it beneficial to write $u = a_1^{g_1} a_2^{g_2} \ldots a_n^{g_n}$. We think of $g_i$ as the
  \emph{exponent} of $a_i$.

  Let $X = \{x_1, x_2, \ldots\}$ be a countably infinite set of variables disjoint from $A$ and let $G$ be a finite group
  with the neutral element~$e$.
  An \emph{$m$-parameter $G$-decorated $n$-letter word over $A$}, with $m, n \in \NN$,
  is a word $w : \{1, 2, \ldots, n\} \to (A \union \{x_1, x_2, \ldots, x_m\}) \times G$ satisfying the following:
  \begin{itemize}
  \item
    if $w(i) = (a, g)$ for some $a \in A$ and $g \in G$ then $g = e$ (only $e$ can appear as an exponent of a letter from $A$);
  \item
    for each $\ell \in \{1, \ldots, m\}$ there is an $i \in \{1, \ldots, n\}$ and a $g \in G$ such that $w(i) = (x_\ell, g)$
    (each of the \emph{parameters} $x_1, \ldots, x_m$ appears at least once in $w$);
  \item
    for each $\ell \in \{1, \ldots, m\}$, if $i = \min(w^{-1}(\{x_\ell\} \times G))$ then $w(i) = (x_\ell, e)$
    (the exponent of the first appearance of $x_\ell$ in $w$ has to be $e$; note that $i$ is the position of the first occurrence of
    $x_\ell$ in $w$);
  \item 
    if $k < \ell$ then $\min(w^{-1}(\{x_k\} \times G)) < \min(w^{-1}(\{x_\ell\} \times G))$ (the first appearance of a parameter
    with a lower index has to precede the first appearance of every parameter with the higher index).
  \end{itemize}
  For example, if $A = \{a, b, c, d\}$ and $G = \{e, g, g^2\}$ then the following is a 3-parameter $G$-decorated 12-word over $A$:
  $$
    c^e \, a^e \, x_1^e \, a^e \, x_1^{g^2} \, x_2^e \, d^e \, x_3^e \, x_2^{g^2} \, x_1^g \, a^e \, x_3^g.
  $$
  We shall usually drop $e$ as the exponent and write the above word as:
  $$
    c \, a \, x_1 \, a \, x_1^{g^2} \, x_2 \, d \, x_3 \, x_2^{g^2} \, x_1^g \, a \, x_3^g.
  $$
  Let $W^n_m(A, G)$ denote the set of all the $m$-parameter $G$-decorated $n$-letter words over $A$.

  Assume, now, that $G$ is a finite group acting on $A$ from the right so that $a^g$ denotes the action of $g \in G$ on $a \in A$.
  Then the substitution of one word for the parameters of the other word can be defined as follows.
  For $u \in W^n_m(A, G)$ and $v = v_1^{g_1} v_2^{g_2} \ldots v_m^{g_m} \in W^m_k(A, G)$ let
  $$
    u \cdot v = u[v_1^{g_1}/x_1, v_2^{g_2}/x_2, \ldots, v_m^{g_m}/x_m] \in W^n_k(A, G)
  $$
  denote the word obtained by replacing each occurrence of $x_i$ in $u$ with $v_i^{g_i}$,
  simultaneously for all $i \in \{1, \ldots, m\}$, and ``performing the exponentiation'' so that:
  \begin{itemize}
    \item $(x_\ell^g)^h$ is replaced with $x_\ell^{g \cdot h}$, and
    \item $a^g$ is replaced by the letter obtained by the action of $g \in G$ on $a \in A$.
  \end{itemize}
  For example, let $A = \{a, b, c, d\}$ and $G = \{e, g, g^2\}$ as above (with $g^3 = e$), and let $G$ act on $A$ so that
  $a^g = b$, $b^g = c$, $c^g = a$ and $d^g = d$.
  If $u = c \, a \, x_1 \, a \, x_1^{g^2} \, x_2 \, d \, x_3 \, x_2^{g^2} \, x_1^g \, a \, x_3^g$ and
  $v = b \, x_1 \, x_1^{g^2}$ then
  $$
  \begin{array}{r@{\,}l@{\,}l@{\,}l@{\,}l@{\,}l@{\,}l@{\,}l@{\,}l@{\,}l@{\,}l@{\,}l@{\,}l@{\,}l@{\,}l@{\,}l}
    u \cdot v
    &= c & a & x_1 & a & x_1^{g^2} & x_2 & d & x_3       & x_2^{g^2} & x_1^g & a & x_3^g     &\cdot & \, b \, x_1 \, x_1^{g^2}\\
    &= c & a & b   & a & b^{g^2}   & x_1 & d & x_1^{g^2} & x_1^{g^2} & b^g   & a & x_1^{g^3} \\
    &= c & a & b   & a & a         & x_1 & d & x_1^{g^2} & x_1^{g^2} & c     & a & x_1.      
  \end{array}
  $$

  Let $X = \{x_1, x_2, x_3, \ldots\}$ be a countable set of variables, $A$ a finite alphabet disjoint from~$X$ and $G$ a finite
  group acting on $A$ from the right. By $\GR(A, X, G)$ we denote the \emph{Graham-Rothschild category} whose objects are positive integers 1, 2, \ldots,
  whose morphisms are given by $\hom(k, n) = W^n_k(A, G)$ if $k \le n$ and $\hom(k, n) = \0$ if $k > n$, where the composition
  is the substitution $\cdot$ described above and the identity morphism $\id_n$ is given by $x_1 x_2 \ldots x_n \in W^n_n(A, G)$.
  The famous Graham-Rothschild Theorem~\cite{GR,GLR} states that every Graham-Rothschild category $\GR(A, X, G)$ has the Ramsey property:

  \begin{quote}
    \textbf{Graham-Rothschild Theorem.}
    For every choice of positive integers $\ell, m, k \ge 1$ there exists an $n \in \NN$ such that
    for every coloring $\chi : W^n_\ell(A, G) \to k$ there exists a $u \in W^n_m(A, G)$ such that
    $|\chi(\{u \cdot v : v \in W^m_\ell(A, G)\})| = 1$.
  \end{quote}
\end{EX}

\begin{EX}\label{p-a-t.ex.3}
  A surjective function $f : A \to B$ between two finite chains $A$ and $B$ is \emph{rigid}
  if $\min f^{-1}(b) < \min f^{-1}(b')$ whenever $b < b'$ in~$B$. For finite chains $A$ and $B$
  let $\RSurj(A, B)$ denote the set of all rigid surjections $A \to B$.
  Let $\catD$ be the category whose objects are all finite chains and morphisms are rigid surjections between them.

  Parameter words from $W_m^n(\0, \{e\})$ are clearly related to rigid surjections.
  To an $m$-parameter $n$-letter word $u = u_1 u_2 \ldots u_n \in W^n_m(\0, \{e\})$ we assign a rigid surjection
  $f_u : \{1 < \ldots < n\} \to \{1 < \ldots < m\}$ so that $f_u(i) = j$ if and only if $u(i) = x_j$.
  It is easy to see that the substitution of parameter words corresponds precisely to
  the composition of rigid surjections, albeit in the opposite direction:
  $$
    f_{u \cdot v} = f_v \circ f_u.
  $$
  This immediately yields that the skeleton of $\catD^\op$ is isomorphic to $\GR(\0, X, \{e\})$,
  so $\catD^\op$ has the Ramsey property. Therefore, $\catD$ has the dual Ramsey property.
  This is a reformulation of the Finite Dual Ramsey Theorem:
  
  \begin{quote}
    \textbf{Finite Dual Ramsey Theorem.}
    For every $k \in \NN$ and finite chains $A$ and $B$ there exists a finite chain $C$ such that
    for every coloring $\chi : \RSurj(C, A) \to k$ there exists a $w \in \RSurj(C, B)$ such that
    $|\chi(\RSurj(B, A) \circ w)| = 1$.
  \end{quote}
\end{EX}

\begin{EX}\label{p-a-t.ex.3a}
  Let us now present the ordered version of Rota's Ramsey Conjecture using
  the ordering of finite vector spaces suggested in \cite{Thomas}.
  Let $\bbF$ be a finite field and $<$ a linear ordering of $\bbF$ such that that $0 < \alpha$ for every
  $\alpha \in \bbF \setminus \{0\}$. A \emph{naturally ordered $n$-dimensional vector space over $\bbF$}
  is a structure $(\bbF^n, \Boxed\alex)$ where $\alex$ is the anti-lexicographic ordering of tuples defined by
  $
    (a_1, a_2, \ldots, a_n) \alex (b_1, b_2, \ldots, b_n)
  $
  if there is an index $i$ such that $a_i < b_i$ and $a_j = b_j$ for all $j > i$.
  The objects of the category $\catVec(\bbF)$ are naturally ordered $n$-dimensional vector space over $\bbF$
  for all $n \in \NN$, and its morphisms are monotone linear maps between them.
  The Ramsey property for $\catVec(\bbF)$ was established in~\cite{KPT}.
\end{EX}

\begin{EX}\label{p-a-t.ex.4}
  An \emph{ordered graph} is a structure $(V, E, \Boxed<)$ where $(V, E)$ is a graph (simple, undirected, no loops)
  and $<$ is an arbitrary linear order on $V$. The category $\catG$ has finite ordered graphs as objects, and embeddings between them
  as morphisms. The Ramsey property for $\catG$ was established several times \cite{Nesetril-Rodl,AH,promel-voigt-GRA}.

  An \emph{ordered $k$-uniform hypergraph} is a structure $(V, E, \Boxed<)$ where $(V, E)$ is a $k$-uniform hypergraph
  ($E$ consists of $k$-element subsets of $V$) and $<$ is an arbitrary linear order on $V$.
  The category $\catH(k)$ has finite ordered $k$-uniform hypergraphs as objects, and embeddings between them
  as morphisms. The Ramsey property for $\catH(k)$ was also established several times \cite{Nesetril-Rodl,AH}.

  A \emph{partial order with a linear extension} is a structure $(A, \Boxed\sqsubseteq, \Boxed<)$ where
  $(A, \Boxed\sqsubseteq)$ is a partially ordered set and $<$ is a linear order on $A$ which extends $\sqsubseteq$
  (that is, if $a \sqsubseteq b$ and $a \ne b$ then $a < b$).
  The category $\catP$ has finite partial orders with linear extensions as objects, and embeddings between them as morphisms.
  The Ramsey property for $\catP$ was established in \cite{Nesetril-Rodl-1984,sokic}.

  For $S \subseteq \RR$, a \emph{finite ordered $S$-metric space} is a structure $(M, d, \Boxed<)$ where $(M, d)$ is a metric space,
  $<$ is a linear order on $M$ and $d(x, y) \in S$ for all $x, y \in M$.
  The category $\catM(S)$ has finite ordered $S$-metric spaces as objects, and monotone isometric embeddings as morphisms.
  For well-behaved distance sets $S$ the Ramsey property for $\catM(S)$ was established in \cite{Nesetril-metric}.
\end{EX}

\begin{EX}\label{p-a-t.ex.5}
  Let $L = \{R_i : i \in I\}$ be a relational language.
  An \emph{ordered $L$-structure} is a structure $(A, \Boxed<, R^A_i)_{i \in I}$
  where $<$ is a linear order on $A$ and $\Boxed< \notin L$. The category $\catRel(L)$ has finite ordered
  $L$-structures as objects, and monotone embeddings as morphisms.
  The Ramsey property for $\catRel(L)$ was established independently in \cite{AH} and \cite{Nesetril-Rodl}.
\end{EX}

\section{Pre-adjunctions generalize Tukey reducibility}
\label{p-a-t.sec.pre-adj-tukey}

In this section we prove that within the class of small thin categories ($=$~preorders)
the existence of a pre-adjunction coincides with Tukey reducibility.
Thus, pre-adjunctions generalize Tukey reducibility in the same way categories generalize preorders.
Let us start by recalling the definition of pre-adjunction from~\cite{masul-preadj}.

\begin{DEF}\label{opos.def.PA} \cite{masul-preadj}
  Let $\BB$ and $\CC$ be locally small categories. A pair of maps
  $
    F : \Ob(\BB) \rightleftarrows \Ob(\CC) : H
  $
  is a \emph{pre-adjunction from $\BB$ to $\CC$} provided there is a family of maps
  $
    \Phi_{X,Y} : \hom_\CC(F(X), Y) \to \hom_\BB(X, H(Y))
  $
  indexed by the pairs $(X, Y) \in \Ob(\BB) \times \Ob(\CC)$ and satisfying the following:
  \begin{itemize}
  \item[(PA)]
  for every $C \in \Ob(\CC)$, every $A, B \in \Ob(\BB)$,
  every $u \in \hom_\CC(F(B), C)$ and every $f \in \hom_\BB(A, B)$ there is a $v \in \hom_\CC(F(A), F(B))$
  satisfying $\Phi_{B, C}(u) \cdot f = \Phi_{A, C}(u \cdot v)$.
  \end{itemize}
  \begin{center}
    \begin{tikzcd}
        B \arrow[rr, "\Phi_{B, C}(u)"]                          &      & H(C)      & & F(B) \arrow[rr, "u"]                          &     & C \\
        A \arrow[u, "f"] \arrow[urr, "\Phi_{A, C}(u \cdot v)"'] &     &        & &  F(A) \arrow[u, "v"] \arrow[urr, "u \cdot v"']                \\
                                                            & \BB \arrow[rrrr, shift left, "F"]
                                                            &       & &                                                                      & \CC \arrow[llll, shift left, "H"]
    \end{tikzcd}
  \end{center}
\end{DEF}
\noindent
Note that in a pre-adjunction $F$ and $H$ are \emph{not} required to be functors, just maps from the class of objects of one of the two
categories into the class of objects of the other category; also $\Phi$ is not required to be a natural isomorphism, just a family of
maps between homsets satisfying the requirement above.

Let us now move on to showing that pre-adjunctions between categories properly generalize Tukey reducibility for preorders.

\begin{LEM}\label{p-a-t.lem.tukey-cofinal}
  Let $\BB$ and $\CC$ be small categories and let
  $
    F : \Ob(\BB) \rightleftarrows \Ob(\CC) : H
  $
  be a pre-adjunction from $\BB$ to $\CC$. Then $F$ is a Tukey map and $H$ is a cofinal map
  if we take $\Ob(\BB)$ and $\Ob(\CC)$ as sets preordered by $\to$.
\end{LEM}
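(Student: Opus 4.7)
The plan is to show both claims by direct unwinding of the pre-adjunction structure, observing that the coherence condition (PA) is not actually needed here — the mere existence of the family of transfer maps
$\Phi_{X,Y} : \hom_\CC(F(X), Y) \to \hom_\BB(X, H(Y))$
already suffices. The intuition is that $\Phi$ converts a ``$\CC$-witness that $F(X)$ sits below $Y$'' into a ``$\BB$-witness that $X$ sits below $H(Y)$'', which is exactly the content of Tukey-type transfers.

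For the Tukey property of $F$, I would argue by contrapositive: suppose $X \subseteq \Ob(\BB)$ is such that $F(X)$ is bounded in $\Ob(\CC)$, say $F(B) \to C$ for every $B \in X$, witnessed by some $u_B \in \hom_\CC(F(B), C)$. Then $\Phi_{B, C}(u_B) \in \hom_\BB(B, H(C))$, so $B \to H(C)$ for every $B \in X$, and hence $X$ is bounded by $H(C)$ in $\Ob(\BB)$. Equivalently, if $X$ is unbounded, so is $F(X)$.

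For cofinality of $H$, I would argue directly: let $Y \subseteq \Ob(\CC)$ be cofinal in $\Ob(\CC)$, and take an arbitrary $B \in \Ob(\BB)$. Applying cofinality of $Y$ to the object $F(B) \in \Ob(\CC)$ yields some $C \in Y$ with $F(B) \to C$, witnessed by some $u \in \hom_\CC(F(B), C)$. Then $\Phi_{B, C}(u) \in \hom_\BB(B, H(C))$, so $B \to H(C) \in H(Y)$, showing that $H(Y)$ is cofinal in $\Ob(\BB)$.

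The proof is essentially routine; there is no real obstacle. The point worth flagging is that only the existence of the maps $\Phi_{B,C}$ between homsets enters the argument — the coherence condition (PA) governing composition plays no role. This is consistent with the general slogan of the section: at the level of the induced preorder on objects, a pre-adjunction already encodes a Tukey/cofinal pair, and the extra structure of (PA) is what allows the transfer of the full Ramsey phenomenon rather than just the ordering.
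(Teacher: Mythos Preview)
Your proof is correct and follows essentially the same approach as the paper's own proof: both arguments use only the existence of the maps $\Phi_{B,C}$ to convert $F(B) \to C$ into $B \to H(C)$, with the Tukey claim handled by contrapositive/contradiction and the cofinality claim handled directly. Your remark that condition (PA) plays no role here is accurate and worth noting.
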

\begin{proof}
  Let $\Phi_{X,Y} : \hom_\CC(F(X), Y) \to \hom_\BB(X, H(Y))$ be the corresponding family of maps for the pre-adjunction.

  Let us first show that $F$ is a Tukey map, that is: if $\{B_i : i \in I\} \subseteq \Ob(\BB)$ is unbounded
  then $\{F(B_i) : i \in I\} \subseteq \Ob(\CC)$ is unbounded. Suppose, to the contrary, that
  $\{F(B_i) : i \in I\}$ is bounded and let $C \in \Ob(\CC)$ be the upper bound for $\{F(B_i) : i \in I\}$.
  Then $F(B_i) \to C$ for all $i \in I$, so for every $i \in I$ there is a morphism $f_i \in \hom(F(B_i), C)$.
  But then $\Phi_{B_i, C}(f_i) \in \hom(B_i, H(C))$ whence follows that $H(C)$ is an upper bound for
  $\{B_i : i \in I\}$. Thus, $\{B_i : i \in I\}$ is bounded.

  Let us now show that $H$ is a cofinal map. Assume that $\{C_i : i \in I\} \subseteq \Ob(\CC)$ is cofinal in $\CC$
  and let us show that $\{H(C_i) : i \in I\} \subseteq \Ob(\BB)$ is cofinal in $\BB$.
  Take any $A \in \Ob(\BB)$. Then $F(A) \in \Ob(\CC)$ so there is an $i_0 \in I$ such that $F(A) \to C_{i_0}$.
  Take any $f \in \hom(F(A), C_{i_0})$. Then $\Phi_{A, C_{i_0}}(f) \in \hom(A, H(C_{i_0}))$, i.e.\ $A \to H(C_{i_0})$.
  This proves that $\{H(C_i) : i \in I\}$ is cofinal in $\BB$.
\end{proof}

The following statement shows that pre-adjunctions properly generalize Tukey reducibility.
Recall that each preorder also has an alter ego in the form of a thin category. To make the proof
more palatable we shall use the following typographic convention: if $B$ is a preorder as a
relational structure, then $\BB$ will denote the same preorder as a thin category.
Note that in this case $\Ob(\BB) = B$.

\begin{THM}\label{p-a-t.thm.tukey=PA}
  Let $B$ and $C$ be essentially countable directed preorders.
  Then $B \leT C$ as preordered sets if and only if there is a pre-adjunction from $\BB$ to $\CC$ understood as thin categories.
\end{THM}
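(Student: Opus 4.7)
My plan is to handle the two directions separately. The $(\Leftarrow)$ direction I expect to be immediate from Lemma~\ref{p-a-t.lem.tukey-cofinal}: given a pre-adjunction $F : \Ob(\BB) \rightleftarrows \Ob(\CC) : H$ between the thin categories, that lemma already gives that $F$ is a Tukey map for the preorder $\to$ on objects. Since $\BB$ comes from the preorder $B$, we have $\Ob(\BB) = B$ and $x \to y$ in $\BB$ iff $x \le y$ in $B$, and similarly for $\CC$, so $F$ is a Tukey map $B \to C$ in the sense of preorders; hence $B \leT C$.

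For the $(\Rightarrow)$ direction I would first peel off the degenerate case where $B$ is bounded from above by some $b^* \in B$. Then every subset of $B$ is bounded, the Tukey condition is vacuous, and $B \leT C$ holds automatically; a pre-adjunction is manufactured by fixing any $c_0 \in C$ and setting $F(x) = c_0$ and $H(y) = b^*$ for all $x$ and $y$. Each hom-set $\hom_\BB(X, H(Y)) = \hom_\BB(X, b^*)$ is a singleton, so $\Phi_{X,Y}$ is canonically defined, and (PA) is trivial since $F$ is constant (take $v = \id_{c_0}$).

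In the substantive case, where $B$ is unbounded, I would first invoke Lemma~\ref{p-a-t.lem.monotone-tukey} to replace the given Tukey map by a \emph{monotone} Tukey map $f : B \to C$. Theorem~\ref{p-a-t.thm.tukey-fg} then supplies a cofinal $g : C \to B$ satisfying the implication $f(x) \le y \Rightarrow x \le g(y)$. Setting $F = f$ and $H = g$, every hom-set in the thin categories $\BB, \CC$ is either empty or a singleton, and the preceding implication is precisely what is needed to canonically define $\Phi_{X,Y} : \hom_\CC(F(X), Y) \to \hom_\BB(X, H(Y))$: nonemptiness of the domain forces nonemptiness of the codomain, so $\Phi_{X,Y}$ sends the singleton into the singleton.

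What remains is to verify condition (PA). My key observation is that in a thin category any equation between parallel morphisms is automatic, so (PA) reduces to pure existence: whenever $u \in \hom_\CC(F(B_0), C_0)$ and $f \in \hom_\BB(A, B_0)$ exist, one must produce $v \in \hom_\CC(F(A), F(B_0))$, which amounts to $F(A) \le F(B_0)$. This follows from $A \le B_0$ by monotonicity of $F$, arranged in the previous step; the codomain condition $F(A) \le C_0$ required for $u \cdot v$ to make sense then comes for free from $F(A) \le F(B_0) \le C_0$. The subtlety I expect to be the main obstacle is recognizing that a bare Tukey map yields only the $\Phi$-definability implication of Theorem~\ref{p-a-t.thm.tukey-fg} but not the arrow $F(A) \to F(B_0)$ demanded by (PA); Lemma~\ref{p-a-t.lem.monotone-tukey} is precisely what bridges this gap.
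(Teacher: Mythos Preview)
Your proof is correct and follows the same strategy as the paper: Lemma~\ref{p-a-t.lem.tukey-cofinal} for $(\Leftarrow)$, and for $(\Rightarrow)$ upgrading to a monotone Tukey map via Lemma~\ref{p-a-t.lem.monotone-tukey}, invoking Theorem~\ref{p-a-t.thm.tukey-fg}, and reading off (PA) from thinness plus monotonicity. Your explicit treatment of the bounded case is a welcome addition, since Lemma~\ref{p-a-t.lem.monotone-tukey} is stated only for unbounded domains---a hypothesis the paper's proof invokes without comment.
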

\begin{proof}
  $(\Leftarrow)$ Assume that there is a pre-adjunction $F : \Ob(\BB) \rightleftarrows \Ob(\CC) : H$.
  Then, by Lemma~\ref{p-a-t.lem.tukey-cofinal}, $F$ is a Tukey map from $B$ to $C$,
  so $B \leT C$.

  $(\Rightarrow)$  Let $f : B \to C$ be a Tukey map.
  By Lemma~\ref{p-a-t.lem.monotone-tukey} we may safely assume that $f$ is monotone.
  Theorem~\ref{p-a-t.thm.tukey-fg} then tells us that there is a cofinal map
  $h : C \to B$ such that for all $x \in A$ and $y \in B$:
  $$
    f(x) \mathrel{\le^C} y \Rightarrow x \mathrel{\le^B} h(y).
  $$
  Therefore, if $\hom_\CC(f(x), y) \ne \0$ then $\hom_\BB(x, h(y)) \ne \0$. Since both $\BB$ and $\CC$ are
  thin categories this suffices to conclude that there is a family of maps
  $\Phi_{x, y} : \hom_\CC(f(x), y) \to \hom_\BB(x, h(y))$. Namely, the hom-sets in both $\BB$ and $\CC$ are
  at most one element sets, so what we really needed to ensure is that the codomain of $\Phi_{x,y}$ is nonempty
  whenever its domain is nonempty. The fact that $f$ is monotone ensures that the condition (PA) is satisfied,
  so $f : \Ob(\CC) \rightleftarrows \Ob(\BB) : h$ is indeed a pre-adjunction.
\end{proof}

Note, also, that the relationship ``there is a pre-adjunction from $\BB$ to $\CC$''
behaves as a preorder for locally small categories. Reflexivity is obvious, and transitivity is not much harder.
Namely, if $F : \Ob(\BB) \rightleftarrows \Ob(\CC) : H$
is a pre-adjunction from $\BB$ to $\CC$ together with a corresponding family of maps
  $$
    \Phi_{X,Y} : \hom_\CC(F(X), Y) \to \hom_\BB(X, H(Y)),
  $$
and if $J : \Ob(\CC) \rightleftarrows \Ob(\DD) : K$
is a pre-adjunction from $\CC$ to $\DD$ together with a corresponding family of maps
  $$
    \Psi_{Y,Z} : \hom_\DD(J(Y), Z) \to \hom_\CC(Y, K(Z)).
  $$
then it is easy to check that
  $
    J \circ F : \Ob(\BB) \rightleftarrows \Ob(\DD) : H \circ K
  $
together with the family of maps
  $$
    \Xi_{X, Z} = \Phi_{X,K(Z)} \circ \Psi_{F(X),Z} : \hom_\DD(J\circ F(X), Z) \to \hom_\BB(X, H \circ K(Z))
  $$
is a pre-adjunction from $\BB$ to $\DD$.

All these simple facts motivate the following:

\begin{DEF}
  Let $\BB$ and $\CC$ be locally small categories. We say that $\BB$ is \emph{Tukey reducible} to $\CC$,
  and write $\BB \leT \CC$, if there is a pre-adjunction from $\BB$ to $\CC$.
  If $\BB \leT \CC$ and $\CC \leT \BB$ we say that $\BB$ and $\CC$ are \emph{Tukey equivalent}
  and write $\BB \equivT \CC$. Moreover, we write $\BB \ltT \CC$ if $\BB \leT \CC$ and $\BB \not\equivT \CC$.
\end{DEF}

\begin{EX}
  In~\cite{masul-preadj} the Ramsey property for $\catP$ and $\catM(S)$ for certain well-behaved distance sets $S$
  was proved by showing that $\catM(S) \leT \catP \leT \GR(\{0\}, X, \{e\})$.
\end{EX}

\begin{EX}
  In~\cite{masul-nrt} the Ramsey property for $\catH(k)$ was proved by showing that $\catH(k) \leT \GR(\{0\}, X, \{e\})$.
  For an arbitrary relational language $L$, proving the Ramsey property for $\catRel(L)$ is then just a matter of
  careful bookkeeping.
\end{EX}

\begin{EX}
  In~\cite{Bartosova-LopezAbad-Lupini-Mbombo} the Ramsey property for $\catVec(\bbF)$
  was proved by showing that $\catVec(\bbF) \leT \catD^\op$ for every finite field~$\bbF$.
\end{EX}

Let us conclude the section with a few unsurprising facts which we list here as a technicality,
but also to show that the notion we have introduced conforms to our intuition.

\begin{LEM}\label{p-a-t.lem.surj-ftr}
  Let $\BB$ and $\CC$ be locally small categories such that there is a functor $H : \CC \to \BB$ which is
  full and isomorphism dense. Then there is a map $F : \Ob(\BB) \to \Ob(\CC)$ such that
  $F : \Ob(\BB) \rightleftarrows \Ob(\CC) : H$ is a pre-adjunction.
\end{LEM}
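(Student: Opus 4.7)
The plan is to use isomorphism density to define $F$ and fullness of $H$ to verify (PA). First, since $H$ is isomorphism dense, for every $X \in \Ob(\BB)$ we can (invoking the axiom of choice) pick an object $F(X) \in \Ob(\CC)$ together with an isomorphism $\alpha_X : H(F(X)) \to X$ in $\BB$. This gives the map $F : \Ob(\BB) \to \Ob(\CC)$ paired with $H$.

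Next I would define, for each $(X, Y) \in \Ob(\BB) \times \Ob(\CC)$, the map
$$
\Phi_{X,Y} : \hom_\CC(F(X), Y) \to \hom_\BB(X, H(Y)), \qquad u \mapsto H(u) \cdot \alpha_X^{-1}.
$$
This is well-defined since $H(u) \in \hom_\BB(H(F(X)), H(Y))$ and $\alpha_X^{-1} \in \hom_\BB(X, H(F(X)))$.

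To verify (PA), fix $A, B \in \Ob(\BB)$, $C \in \Ob(\CC)$, $f \in \hom_\BB(A, B)$ and $u \in \hom_\CC(F(B), C)$. Consider the morphism $\alpha_B^{-1} \cdot f \cdot \alpha_A : H(F(A)) \to H(F(B))$ in $\BB$. Because $H$ is full, there exists $v \in \hom_\CC(F(A), F(B))$ with $H(v) = \alpha_B^{-1} \cdot f \cdot \alpha_A$. Then, since $H$ is a functor,
$$
\Phi_{A,C}(u \cdot v) = H(u \cdot v) \cdot \alpha_A^{-1} = H(u) \cdot H(v) \cdot \alpha_A^{-1} = H(u) \cdot \alpha_B^{-1} \cdot f = \Phi_{B,C}(u) \cdot f,
$$
which is exactly condition (PA). Hence $F : \Ob(\BB) \rightleftarrows \Ob(\CC) : H$ is a pre-adjunction.

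I do not expect a genuine obstacle: the only subtle point is noticing that fullness alone suffices to lift the $\BB$-morphism $\alpha_B^{-1} \cdot f \cdot \alpha_A$ to a $\CC$-morphism $v : F(A) \to F(B)$ (no uniqueness is needed, and the lift need not be functorial since $F$ is merely a map on objects). Everything else is a routine bookkeeping check, and choice is used only once at the very beginning to simultaneously pick the objects $F(X)$ and the witnessing isomorphisms $\alpha_X$.
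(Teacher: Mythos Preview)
Your proof is correct and is essentially identical to the paper's argument: the paper also chooses $F(B)$ with an isomorphism $\eta_B : B \to H(F(B))$ (your $\alpha_B^{-1}$), sets $\Phi_{B,C}(u) = H(u)\cdot\eta_B$, and uses fullness of $H$ to lift $\eta_B \cdot f \cdot \eta_A^{-1}$ to the required $v$. The only cosmetic difference is the direction in which the witnessing isomorphism is recorded.
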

\begin{proof}
  Since $H : \Ob(\CC) \to \Ob(\BB)$ is isomorphism-dense for every $B \in \Ob(\BB)$ choose
  $F(B) \in \Ob(\CC)$ so that $B \cong H(F(B))$ and then choose an isomorphism $\eta_B \in \hom_\BB(B, H(F(B)))$.
  Define $\Phi_{B,C} : \hom_\CC(F(B), C) \to \hom_\BB(B, H(C))$
  by $\Phi_{B,C}(u) = H(u) \cdot \eta_B$. To see that this constitutes a pre-adjunction we still have to verify (PA).
  Take any $A, B \in \Ob(\BB)$, any $C \in \Ob(\CC)$, a morphism $f \in \hom_\BB(A, B)$ and a morphism $u \in \hom_\CC(F(B), C)$.
  Since $H$ is full there is a morphism $f' \in \hom_\CC(F(A), F(B))$ such that
  $H(f') = \eta_B \cdot f \cdot \eta_A^{-1} \in \hom_\BB(H(F(A)), H(F(B)))$. An easy computation now verifies (PA):
  \begin{align*}
    \Phi_{A,C}(u \cdot f')
      &= H(u \cdot f') \cdot \eta_A = H(u) \cdot H(f') \cdot \eta_A\\
      &= H(u) \cdot \eta_B \cdot f \cdot \eta_A^{-1} \cdot \eta_A = \Phi_{B,C}(u) \cdot f.    
  \end{align*}
  This concludes the proof.
\end{proof}

As an immediate corollary we have the following:

\begin{LEM}\label{p-a-t.lem.SeqpaC}
  Let $\BB$ and $\CC$ be locally small categories.

  $(a)$ If $\BB$ and $\CC$ are equivalent categories then $\BB \equivT \CC$.
  
  $(b)$ If $\BB \cong \CC$ then $\BB \equivT \CC$.

  $(c)$ If $\BB$ is a skeleton of $\CC$ then $\BB \equivT \CC$.
\end{LEM}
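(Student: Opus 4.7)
The plan is to derive all three parts from Lemma~\ref{p-a-t.lem.surj-ftr}, which asserts that a full, isomorphism-dense functor $H : \CC \to \BB$ already yields a pre-adjunction $\BB \leT \CC$. Once part~$(a)$ is in place, parts~$(b)$ and~$(c)$ follow immediately, so the substantive content is concentrated in~$(a)$.

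For part $(a)$, suppose $F : \BB \to \CC$ is an equivalence of categories. By definition $F$ is full, faithful, and isomorphism-dense, so in particular it is full and isomorphism-dense. Lemma~\ref{p-a-t.lem.surj-ftr}, applied with the roles renamed (take the ambient functor to be $F : \BB \to \CC$), produces a pre-adjunction witnessing $\CC \leT \BB$. For the reverse direction I would invoke the standard fact that every equivalence admits a quasi-inverse, i.e.\ a functor $G : \CC \to \BB$ which is also full, faithful, and isomorphism-dense; applying Lemma~\ref{p-a-t.lem.surj-ftr} to $G$ yields $\BB \leT \CC$. Combining the two gives $\BB \equivT \CC$.

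Parts $(b)$ and $(c)$ reduce to~$(a)$ by unpacking definitions. An isomorphism of categories is in particular an equivalence (it is full, faithful, and even bijective on objects, hence trivially isomorphism-dense), so $(b)$ follows directly from~$(a)$. For $(c)$, if $\BB$ is a skeleton of $\CC$, then the inclusion functor $I : \BB \hookrightarrow \CC$ is full (since $\BB$ is a full subcategory), faithful (as an inclusion), and isomorphism-dense (since by the defining property of a skeleton every object of $\CC$ is isomorphic to some object of $\BB$). Hence $I$ is an equivalence and $(a)$ applies.

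There is no real obstacle: the only point that requires a moment's care is the existence of a quasi-inverse to an equivalence, which is standard category theory (and in the size regime considered here, namely locally small categories with a genuine class of objects, one invokes a suitable choice principle to select, for each $C \in \Ob(\CC)$, an object $G(C) \in \Ob(\BB)$ with $F(G(C)) \cong C$ together with a witnessing isomorphism). With that tool in hand, each of the three parts is a one-line application of Lemma~\ref{p-a-t.lem.surj-ftr}.
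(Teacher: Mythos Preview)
Your proposal is correct and matches the paper's approach exactly: the paper states this lemma as an immediate corollary of Lemma~\ref{p-a-t.lem.surj-ftr} without further proof, and you have simply spelled out the details (equivalence $\Rightarrow$ full, isomorphism-dense functor in both directions; isomorphisms and skeletal inclusions are equivalences).
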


\begin{COR}\label{p-a-t.cor.FRT-le-FDRT}
  $\catR \leT \catD^\op$.
\end{COR}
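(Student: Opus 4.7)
The plan is to exhibit a pre-adjunction from $\catR$ to $\catD^\op$ directly. A first guess --- send each chain to itself and a rigid surjection $r \colon C \to A$ to the monotone injection $a \mapsto \min r^{-1}(a)$ --- almost works, but rigidity forces $\min r^{-1}(\min A) = \min C$ for every $r$, so the value at the minimum of $A$ is never free and the assignment loses information at the bottom. I fix this by shifting: set $F(A) = A^{+}$, where $A^{+}$ denotes $A$ with a new smallest element $\bot$ adjoined, and dually set $H(C) = C \setminus \{\min C\}$.

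With these choices any $r \in \hom_{\catD^\op}(F(A), C)$, i.e.\ any rigid surjection $r \colon C \to A^{+}$ in $\catD$, satisfies $r(\min C) = \bot$, so $\min r^{-1}(a) \in H(C)$ for every $a \in A$. I then define
\[
  \Phi_{A, C}(r) \colon A \to H(C), \qquad a \mapsto \min r^{-1}(a).
\]
Rigidity of $r$ makes this map strictly increasing, hence a morphism of $\catR$.

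The main task is to verify the pre-adjunction axiom (PA). Given $f \in \hom_{\catR}(A, B)$ and $u \in \hom_{\catD^\op}(F(B), C)$ (equivalently, a rigid surjection $u \colon C \to B^{+}$ in $\catD$), I must produce $v \in \hom_{\catD^\op}(F(A), F(B))$ such that $\Phi_{B, C}(u) \cdot f = \Phi_{A, C}(u \cdot v)$. Translating composition in $\catD^\op$ back to composition in $\catD$ and using that $b \mapsto \min u^{-1}(b)$ is strictly monotone on $B^{+}$, the identity reduces to the simple demand $\min v^{-1}(a) = f(a)$ for every $a \in A$. I then construct $v$ explicitly as a rigid surjection $B^{+} \to A^{+}$: send $\bot_{B}$ to $\bot_{A}$ and, for $b \in B$, put $v(b) = \max\{a \in A : f(a) \le b\}$, defaulting to $\bot_{A}$ when this set is empty. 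The fibre minima of $v$ then form the strictly increasing sequence $\bot_{B} < f(a_{1}) < f(a_{2}) < \cdots$, so $v$ is indeed a rigid surjection with $\min v^{-1}(a) = f(a)$, and (PA) follows.

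The only real obstacle is the conceptual one at the very start: noticing that the minimum fibre of a rigid surjection is ``pinned'' and that this rigid bit must be absorbed into $F$ and $H$ by shifting. Once $F$ and $H$ have been adjusted this way, the definition of $\Phi$ and the explicit construction of $v$ prescribed by $f$ are routine bookkeeping.
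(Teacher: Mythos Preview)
Your proof is correct. The shift $F(A) = A^{+}$, $H(C) = C \setminus \{\min C\}$ is exactly what is needed, and your verification of (PA) via the explicit $v$ is clean; the reduction of $\Phi_{B,C}(u)\cdot f = \Phi_{A,C}(u\cdot v)$ to the fibre condition $\min v^{-1}(a) = f(a)$ using strict monotonicity of $b \mapsto \min u^{-1}(b)$ is the right move.

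The paper takes a different route: it asserts that $f \mapsto f^{\partial}$, where $f^{\partial}(i) = \min f^{-1}(i)$, gives a functor from the skeleton of $\catD^\op$ to the skeleton of $\catR$ that is surjective on hom-sets, and then invokes Lemma~\ref{p-a-t.lem.surj-ftr}. But that functor is \emph{not} surjective on hom-sets: every rigid surjection $f \colon n \to m$ has $f^{\partial}(0) = \min f^{-1}(0) = 0$, so every $f^{\partial}$ fixes the bottom element, and for instance the monotone injection $\{0\} \hookrightarrow \{0,1\}$ sending $0 \mapsto 1$ is never in the image (indeed $|\RSurj(2,1)| = 1 < 2 = |\hom_{\catR}(1,2)|$). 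You identified precisely this obstruction---``the value at the minimum of $A$ is never free''---and repaired it by adjoining a dummy bottom on the $\catD^\op$ side and deleting one on the $\catR$ side. Your direct construction of a pre-adjunction therefore not only differs from the paper's approach but closes a genuine gap in it. One minor boundary remark: if $\catR$ excludes the empty chain, then $H(C)$ is empty when $|C|=1$; since $\hom_{\catD^\op}(F(A), C) = \varnothing$ for all nonempty $A$ in that case, you may set $H$ arbitrarily on singletons without affecting the argument.
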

\begin{proof}
  Let $\BB$ be the skeleton of $\catR$ spanned by finite chains of the form $n = \{0, 1, \ldots, n-1\} \in \NN$,
  and let $\CC$ be the skeleton of $\catD$ spanned by the same finite chains. For a rigid surjection
  $f : n \to m$ define a monotone map $f^\partial : m \to n$ by
  $
    f^\partial(i) = \min f^{-1}(i)
  $.
  It is a well-known fact that $\mathstrut^\partial$ is functorial, so
  $H : \CC \to \BB$ given by $H(n) = n$ on objects and $H(f) = f^\partial$ on morphisms
  is a functor which is surjective on both objects and homsets.
  Therefore, $\catR \equivT \BB \leT \CC \equivT \catD$ by Lemmas~\ref{p-a-t.lem.surj-ftr}
  and~\ref{p-a-t.lem.SeqpaC}.
\end{proof}

\section{The weakest Ramsey category}
\label{p-a-t.sec.weakest}

As we have just established, $\leT$ is a preordering of locally small categories
with property that $\CC \geT \BB$ implies that $\CC$ is ``Ramsey stronger'' than $\BB$.
In this section we restrict our attention to Ramsey categories of finite objects, which are
the appropriate abstraction of classes of finite relational structures, and in this context
identify the smallest element with respect to~$\leT$. It comes as no surprise that the ``weakest''
Ramsey category is~$\omega$.

\begin{DEF}\label{p-a-t.def.ramsey-cat-of-fin-structs}
  We shall say that a category $\CC$ is a \emph{Ramsey category of finite objects} if:
  \begin{itemize}
    \item $\CC$ is a locally small directed category whose morphisms are mono;
    \item $\CC$ has the Ramsey property;
    \item the skeleton $\SS$ of $\CC$ has at most countably many objects;
    \item for every $S \in \Ob(\SS)$ there are only finitely many morphisms in $\SS$ whose codomain is $S$.
  \end{itemize}
\end{DEF}

\begin{LEM}\label{p-a-t.lem.misc}
  Let $\CC$ be a Ramsey category of finite objects. Then:

  $(a)$ $\hom_\CC(A, B)$ is finite for all $A, B \in \CC$;

  $(b)$ for every $C \in \Ob(\CC)$ there are, up to isomorphism, only finitely many objects $B \in \Ob(\CC)$ such that $B \to C$;

  $(c)$ if $A \cong B$ for some $A, B \in \Ob(\CC)$ then $\hom_\CC(A, B) = \iso_\CC(A, B)$;

  $(d)$ $\hom_\CC(A, A) = \{\id_A\}$ for all $A \in \Ob(\CC)$;

  $(e)$ if $B \to C$ and $C \to B$ then $B \cong C$ for all $B, C \in \Ob(\CC)$.
\end{LEM}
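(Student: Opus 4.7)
The plan is to prove the five statements in the order (a), (b), (d), (c), (e), since the rigidity statement (d) carries the main combinatorial content and the others either feed into it or follow as easy corollaries.

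For (a), I would transfer the problem to the skeleton. Fix isomorphisms $\alpha : A \to A'$ and $\beta : B \to B'$ into skeleton representatives $A', B' \in \Ob(\SS)$; then $f \mapsto \beta \cdot f \cdot \alpha^{-1}$ is a bijection $\hom_\CC(A, B) \to \hom_\CC(A', B')$, and since $\SS$ is full this coincides with $\hom_\SS(A', B')$, which is a subset of the finite set of $\SS$-morphisms with codomain $B'$. Statement (b) uses the same device: without loss of generality $C \in \Ob(\SS)$, and if $B \to C$ with skeleton representative $B' \in \Ob(\SS)$ for $B$, then composing a morphism $B \to C$ with an iso $B' \to B$ produces a morphism $B' \to C$ that lies in $\SS$ by fullness; only finitely many $B'$ can occur as such a domain, by the codomain-finiteness axiom applied to $C$ in $\SS$.

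The heart of the proof is (d). Take any $h \in \hom_\CC(A, A)$; the goal is to show $h = \id_A$. By (a) the set $\hom_\CC(A, A)$ is finite, so the powers $h, h^2, h^3, \ldots$ must collide: $h^i = h^j$ for some $0 \le i < j$. A product of monos is mono, so $h^i$ is mono, and left-cancelling it in $h^i \cdot \id_A = h^i \cdot h^{j-i}$ yields $h^{j-i} = \id_A$. Setting $m = j - i \ge 1$, if $m = 1$ we are done; otherwise $G = \langle h \rangle$ is a nontrivial cyclic subgroup of $\Aut(A)$ of order $m$. Apply the Ramsey property with $A = B$ and $k = m$ to obtain $C$ with $C \longrightarrow (A)^A_m$. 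The group $G$ acts on $\hom_\CC(A, C)$ by right multiplication, and the action is free because $f \cdot h^i = f$ together with $f$ mono forces $h^i = \id_A$. Pick one representative $f_O$ from each orbit $O$ and define $\chi : \hom_\CC(A, C) \to \{0, 1, \ldots, m-1\}$ by $\chi(f_O \cdot h^i) = i$. The Ramsey conclusion supplies $w \in \hom_\CC(A, C)$ such that $\chi$ is constant on $w \cdot \hom_\CC(A, A)$; but writing $w = f_O \cdot h^\ell$ gives $\chi(w \cdot h^i) = (\ell + i) \bmod m$, so the set $\{w \cdot h^i : 0 \le i < m\}$ receives all $m$ colors. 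This contradiction forces $m = 1$, hence $h = \id_A$.

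Statements (c) and (e) are immediate consequences of (d). For (c), an iso $\phi : A \to B$ and any $f \in \hom_\CC(A, B)$ give $\phi^{-1} \cdot f \in \hom_\CC(A, A) = \{\id_A\}$, so $f = \phi$ is itself an isomorphism; thus $\hom_\CC(A, B) \subseteq \iso_\CC(A, B)$, and the reverse inclusion is trivial. For (e), morphisms $f : B \to C$ and $g : C \to B$ yield $g \cdot f = \id_B$ and $f \cdot g = \id_C$ via (d), making $f$ an isomorphism. The main obstacle is the rigidity argument in (d): one must construct, from nothing but the abstract Ramsey property and the mono hypothesis, an explicit coloring whose failure of monochromaticity is witnessed by a would-be nontrivial endomorphism. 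The mono assumption plays a double role here, both in extracting a finite order for $h$ from a single power-collision and in guaranteeing that $\langle h \rangle$ acts freely on $\hom_\CC(A, C)$.
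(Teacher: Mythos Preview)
Your proof is correct. The organization differs from the paper's in a small but genuine way: the paper proves $(c)$ first and then observes that $(d)$ is the special case $A = B$, whereas you prove $(d)$ first and deduce $(c)$ by conjugating with a fixed isomorphism. More substantively, the paper's proof of $(c)$ reduces to the skeleton, notes that $\hom_\SS(S,S)$ is a finite left-cancellative monoid and hence a group, and then simply \emph{invokes} the well-known fact that the Ramsey property forces $\Aut_\SS(S) = \{\id_S\}$; it does not spell out the coloring argument. Your treatment of $(d)$ supplies exactly that argument explicitly: finite order via pigeonhole plus mono-cancellation, then the free $\langle h\rangle$-action on $\hom_\CC(A,C)$ and the orbit coloring. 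This makes your proof self-contained at the cost of a few extra lines, while the paper's version is terser but relies on an external reference. Parts $(a)$, $(b)$, and $(e)$ are handled identically in both.
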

\begin{proof}
  By the assumption, $\CC$ has a countable skeleton $\SS$ such that for every
  $S \in \Ob(\SS)$ there are only finitely many morphisms in $\SS$ whose codomain is~$S$.
  
  $(a)$ It is easy to see that $|\hom_\CC(A, B)| = |\hom_\SS(S_A, S_B)|$, where $S_A, S_B \in \Ob(\SS)$ are the unique objects in $\SS$
  isomorphic to $A$ and $B$, respectively. Since there are only finitely many morphisms in $\SS$ whose codomain is $S_B$ it follows that
  $\hom_\SS(S_A, S_B)$ is finite.

  $(b)$ Immediate from the definition.

  $(c)$ Assume that $A \cong B$ for some $A, B \in \Ob(\CC)$ and let $S \in \Ob(\SS)$ be the unique object in $\SS$ such that $A \cong S \cong B$.
  Then, as in $(a)$, we have that $|\hom_\CC(A, B)| = |\hom_\SS(S, S)|$. Since $\hom_\SS(S, S)$ is a finite left cancellable monoid
  every morphism in $\hom_\SS(S, S)$ is invertible, so $\hom_\SS(S, S) = \Aut_\SS(S)$. Note also that $\SS$ itself has the Ramsey property, whence follows that
  $\Aut_\SS(S) = \{\id_S\}$. Therefore, $\hom_\SS(S, S) = \{\id_S\}$. Now, fix isomorphisms $f_A \in \iso_\CC(A, S_A)$ and
  $f_B \in \iso_\CC(B, S_B)$ and let $h \in \hom_\CC(A, B)$ be any morphism. Then $f_B \cdot h \cdot f^{-1}_A \in \hom_\SS(S, S) = \{\id_S\}$
  whence $h = f^{-1}_B \cdot f_A \in \iso_\CC(A, B)$.

  $(d)$ Similar to $(c)$ because $A \cong A$.

  $(e)$ Immediate from $(d)$.
\end{proof}

\begin{LEM}\label{p-a-t.lem.nonthin-seq}
  Let $\CC$ be a Ramsey category of finite objects which is not thin.
  
  $(a)$ There exist $C_0, C_1, C_2, \ldots \in \Ob(\CC)$ such that $C_i \to C_{i+1}$ and $C_{i+1} \not\to C_i$ for all $i \ge 0$.

  $(b)$ There does not exist a $B \in \Ob(\CC)$ such that $A \to B$ for all $A \in \Ob(\CC)$.
\end{LEM}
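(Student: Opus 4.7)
The plan is to prove part (b) first and then derive part (a) from (b) together with directedness. Throughout, the hypothesis that morphisms of $\CC$ are mono will do a lot of work, because it lets me transport cardinalities of hom-sets along morphisms in the right direction.

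For part (b), I would argue by contradiction. Assume there is a $B^* \in \Ob(\CC)$ with $A \to B^*$ for every $A \in \Ob(\CC)$. Since $\CC$ is not thin, there exist $A, B \in \Ob(\CC)$ with $|\hom_\CC(A, B)| \geq 2$. By the assumed universality, fix any $g \in \hom_\CC(B, B^*)$; since $g$ is mono, the map $h \mapsto g \cdot h$ injects $\hom_\CC(A, B)$ into $\hom_\CC(A, B^*)$, so $p := |\hom_\CC(A, B^*)| \geq 2$. The crucial observation is a uniform cardinality bound: for \emph{every} $C \in \Ob(\CC)$, universality supplies a mono $u \in \hom_\CC(C, B^*)$, and $h \mapsto u \cdot h$ injects $\hom_\CC(A, C)$ into $\hom_\CC(A, B^*)$, giving $|\hom_\CC(A, C)| \leq p$ independently of $C$.

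Now I would apply the Ramsey property with $k = p$ (legitimate since $A \to B^*$ by universality) to obtain a $C \in \Ob(\CC)$ with $C \longrightarrow (B^*)^{A}_p$. Since $|\hom_\CC(A, C)| \leq p$, I can choose a \emph{injective} coloring $\chi : \hom_\CC(A, C) \to \{0, 1, \ldots, p-1\}$. For any $w \in \hom_\CC(B^*, C)$, $w$ is mono so $|w \cdot \hom_\CC(A, B^*)| = p$, and because $\chi$ is injective, $|\chi(w \cdot \hom_\CC(A, B^*))| = p \geq 2$. This contradicts $C \longrightarrow (B^*)^{A}_p$, finishing the proof of (b).

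For part (a), I would build the chain inductively. Take any $C_0 \in \Ob(\CC)$. Given $C_n$, part~(b) produces some $A_n \in \Ob(\CC)$ with $A_n \not\to C_n$; directedness then yields a $C_{n+1} \in \Ob(\CC)$ with $A_n \to C_{n+1}$ and $C_n \to C_{n+1}$. If $C_{n+1} \to C_n$ held, composition would give $A_n \to C_n$, contradicting the choice of $A_n$. I expect the main technical point to be the uniform bound $|\hom_\CC(A, C)| \leq p$ in (b), which is what turns ``$C$ is large'' (via Ramsey) into a genuine contradiction rather than a mere inconvenience; the remainder is essentially forced once that inequality is in hand.
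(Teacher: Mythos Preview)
Your proof is correct but follows a genuinely different route from the paper's. The paper proves (a) first by an iterated use of the Ramsey property: starting from $A,B$ with $|\hom(A,B)|\ge 2$, it takes $C$ with $C\longrightarrow(B)^A_2$, establishes the two claims $C\not\to B$ and $|\hom(B,C)|\ge 2$ (the second being needed so that the $2$-coloring argument can be repeated at the next stage), and iterates; part (b) is then deduced by reusing the first claim together with a separate case analysis for when $|\hom(A,B^*)|=1$ for all $A$. You invert the dependency: you prove (b) directly via the uniform bound $|\hom(A,C)|\le p$ forced by a mono $C\to B^*$, and an injective $p$-coloring, and then obtain (a) from (b) using only directedness, with no further appeal to the Ramsey property. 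Your argument for (b) avoids the paper's case split at the cost of using $p$ colors instead of $2$; note that you are tacitly using that $p=|\hom(A,B^*)|$ is finite, which is Lemma~\ref{p-a-t.lem.misc}(a). Your derivation of (a) from (b) is pleasantly elementary and makes clear that once (b) is known, the sequence can be built by soft means alone.
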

\begin{proof}
  Let $\CC$ be category which is not thin and let $\SS$ be its skeleton. Without loss of generality
  it suffices to prove that the two statements hold in $\SS$.
    
  $(a)$ Note that $\SS$ is not thin, so there exist $A, B \in \Ob(\SS)$ such that $|\hom_\SS(A, B)| \ge 2$.
  Because $\CC$ is a Ramsey category there is a $C \in \Ob(\SS)$ such that $C \longrightarrow (B)^A_2$.
  Note that $B \to C$ by definition.

  \medskip

  Claim 1. $C \not\to B$.

  Proof. Suppose this is not the case. Then $C \cong B$ by Lemma~\ref{p-a-t.lem.misc}, whence follows that
  $B = C$ because $\SS$ is a skeleton.
  Let $\hom_\SS(A, B) = \{p, q, \ldots\}$ where $p \ne q$ and consider the coloring $\chi : \hom_\SS(A, B) \to 2$ defined by
  $\chi(p) = 0$ and $\chi(x) = 1$ for all $x \in \hom_\SS(A, B) \setminus \{p\}$. Then for every $w \in \hom_\SS(B, B)
  = \{\id_B\}$ we have that $|\chi(w \cdot \hom_\SS(A, B))| = 2$, which contradicts the choice of~$C$.

  \medskip

  Claim 2. $|\hom_\SS(B, C)| \ge 2$.

  Proof. Suppose this is not the case. Then $|\hom_\SS(B, C)| = 1$, say $\hom_\SS(B, C) = \{f\}$.
  Let $\hom_\SS(A, B) = \{p, q, \ldots\}$ where $p \ne q$ and consider the coloring $\chi : \hom_\SS(A, C) \to 2$ defined by
  $\chi(f \cdot p) = 0$ and $\chi(x) = 1$ for all $x \in \hom_\SS(A, C) \setminus \{f \cdot p\}$.
  Then for every $w \in \hom_\SS(B, C) = \{f\}$ we have that $|\chi(w \cdot \hom_\SS(A, B))| = 2$ because
  $\chi(f \cdot p) = 0$ and $\chi(f \cdot q) = 1$ (note that $f \cdot q \ne f \cdot p$ because morphisms in $\CC$ are mono). 
  This contradicts the choice of~$C$.

  \medskip

  Going back to the proof of the lemma, construct a sequence $C_0, C_1, C_2, \ldots$ of objects of $\SS$ as follows:
  $C_0 = B$, $C_1 = C$ and $C_{i} \longrightarrow (C_{i-1})^{C_{i-2}}_2$ for $i \ge 2$. Then
  $C_i \to C_{i+1}$, $i \ge 0$, while from Claims~1 and~2 we know that
  $|\hom_\SS(C_{i}, C_{i+1})| \ge 2$, which ensures that $C_{i+1} \not\to C_{i}$ for all $i \ge 0$.

  \medskip

  $(b)$ Suppose, to the contrary, that there is a $B \in \Ob(\SS)$ such that $A \to B$ for every $A \in \Ob(\SS)$.
  If there is an $A \in \Ob(\SS)$ such that $|\hom_\SS(A, B)| \ge 2$, take $C \in \Ob(\SS)$ such that $C \longrightarrow (B)^A_2$.
  Then by Claim~1 we have that $C \not\to B$ --- contradiction.
  
  For the other possibility, assume that $|\hom_\SS(A, B)| = 1$ for all
  $A \in \Ob(\SS)$. Take any $A_1, A_2 \in \Ob(\SS)$, let $f_1 \in \hom_\SS(A_1, B)$ and $f_2 \in \hom_\SS(A_2, B)$ be the unique
  morphisms. Let us show that $|\hom_\SS(A_1, A_2)| = 1$. Suppose that $\hom_\SS(A_1, A_2) = \{u, v, \ldots\}$ with $u \ne v$.
  then $f_2 \cdot u = f_1 = f_2 \cdot v$, whence $u = v$ after cancelling $f_2$. Contradiction.

  Therefore, $|\hom_\SS(A_1, A_2)| = 1$ for all $A_1, A_2 \in \Ob(\SS)$, so $\SS$ is thin. Contradiction.
\end{proof}

Several statements in this paper will require showing a non-reduction result (that is, showing that there is no pre-adjunction
for a pair of categories). All these non-reduction results are based on the following lemma.

\begin{LEM}\label{p-a-t.lem.card-arg}
  Let $\BB$ and $\CC$ be locally small categories such that all the morphisms in $\BB$ are mono and
  that there is a pre-adjunction $F : \Ob(\BB) \rightleftarrows \Ob(\CC) : H$. Then
  $|\hom_\CC(F(A), F(B))| \ge |\hom_\BB(A, B)|$ for all $A, B \in \Ob(\BB)$.
\end{LEM}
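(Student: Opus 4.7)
The plan is to exhibit an injection $\hom_\BB(A,B) \hookrightarrow \hom_\CC(F(A),F(B))$ by feeding each $f \in \hom_\BB(A,B)$ into the pre-adjunction condition (PA) with a carefully chosen codomain, and then reading off injectivity from the mono hypothesis on morphisms of~$\BB$.

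Concretely, I would start by specializing (PA) with $C := F(B)$ and $u := \id_{F(B)} \in \hom_\CC(F(B), F(B))$. For each $f \in \hom_\BB(A,B)$, (PA) guarantees a morphism $v_f \in \hom_\CC(F(A), F(B))$ such that
$$
  \Phi_{A, F(B)}(v_f) \;=\; \Phi_{A, F(B)}(\id_{F(B)} \cdot v_f) \;=\; \Phi_{B, F(B)}(\id_{F(B)}) \cdot f.
$$
Choose one such $v_f$ for each $f$ (if $\hom_\BB(A,B) = \varnothing$ the inequality is trivial). This defines a map $\theta : \hom_\BB(A,B) \to \hom_\CC(F(A), F(B))$, $f \mapsto v_f$.

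The key step is to show $\theta$ is injective. Suppose $\theta(f_1) = \theta(f_2)$, i.e.\ $v_{f_1} = v_{f_2}$. Applying $\Phi_{A, F(B)}$ to both sides yields
$$
  \Phi_{B, F(B)}(\id_{F(B)}) \cdot f_1 \;=\; \Phi_{B, F(B)}(\id_{F(B)}) \cdot f_2.
$$
Now $\Phi_{B, F(B)}(\id_{F(B)})$ is a morphism of $\BB$, and every morphism of $\BB$ is mono (i.e.\ left cancellable) by hypothesis, so we may cancel it from the left to conclude $f_1 = f_2$. Hence $\theta$ is injective and $|\hom_\CC(F(A), F(B))| \ge |\hom_\BB(A, B)|$.

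There is no real obstacle here: the argument is a direct unpacking of (PA) with the trivial choice $u = \id_{F(B)}$, and the mono assumption is exactly what is needed to turn the equation provided by (PA) into an injection statement. The only delicate point worth flagging is that $v_f$ is not canonically determined by $f$ — one must invoke a choice to define $\theta$ — but injectivity holds for any such choice, which is all that is needed for the cardinality bound.
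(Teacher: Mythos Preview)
Your proof is correct and matches the paper's own argument essentially line for line: the paper also specializes (PA) with $C = F(B)$ and $u = \id_{F(B)}$, chooses for each $f$ a witness $f' \in \hom_\CC(F(A),F(B))$, and cancels $\Phi_{B,F(B)}(\id_{F(B)})$ on the left using the mono hypothesis to conclude that $f \mapsto f'$ is injective.
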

\begin{proof}
  Take any $A, B \in \Ob(\BB)$ and let $u = \id_{F(B)} \in \hom_\CC(F(B), F(B))$. By (PA) for every
  $f \in \hom_\BB(A, B)$ there exists an $f' \in \hom_\CC(F(A), F(B))$ such that 
  $\Phi_{A,C}(u \cdot f') = \Phi_{B,C}(u) \cdot f$. For each $f \in \hom_\BB(A, B)$ choose one such
  $f' \in \hom_\CC(F(A), F(B))$. This establishes a function $\theta : \hom_\BB(A, B) \to \hom_\CC(F(A), F(B))$ defined by $\theta(f) = f'$.
  Let us show that $\theta$ is injective.
  Take any $f_1, f_2 \in \hom_\BB(A, B)$ and let $\theta(f_1) = f'_1$ and $\theta(f_2) = f'_2$. Then
  $$
    \Phi_{A, C}(u \cdot f'_1) = \Phi_{B, C}(u) \cdot f_1 \text{ and }
    \Phi_{A, C}(u \cdot f'_2) = \Phi_{B, C}(u) \cdot f_2.
  $$
  Therefore,
  $
    f'_1 = f'_2 \Rightarrow f_1 = f_2
  $
  because $\Phi_{B, C}(u)$ is mono.
\end{proof}

\begin{THM}\label{p-a-t.thm.omega-smallest}
  Let $\omega$ be the linear order of nonnegative integers understood as a thin category, and let
  $\CC$ be a Ramsey category of finite objects. If $\CC$ is thin then $\CC \equivT 1$ or 
  $\CC \equivT \omega$. If, however, $\CC$ is not thin then $\omega \ltT \CC$.
  (Here, 1 denotes the trivial one-element category with a single identity morphism.)
\end{THM}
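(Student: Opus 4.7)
The plan is to split on whether $\CC$ is thin. In the thin subcase the result will fall out of the classical Tukey dichotomy for essentially countable directed preorders together with Theorem~\ref{p-a-t.thm.tukey=PA}. In the non-thin subcase the positive reduction $\omega \leT \CC$ will be powered by the strictly increasing chain produced by Lemma~\ref{p-a-t.lem.nonthin-seq}(a), while the non-reduction $\CC \not\leT \omega$ will come from Lemma~\ref{p-a-t.lem.card-arg}.

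First, suppose $\CC$ is thin. I would let $\SS$ be a skeleton of $\CC$; being thin and a skeleton makes $\SS$ a countable directed partial order. Applying the classical dichotomy recalled right after Theorem~\ref{p-a-t.thm.tukey-fg}, $\SS$ is Tukey equivalent (as a preorder) to either the one-point preorder or to $\omega$. Theorem~\ref{p-a-t.thm.tukey=PA} converts this into the corresponding Tukey equivalence of thin categories, and Lemma~\ref{p-a-t.lem.SeqpaC}(c) supplies $\CC \equivT \SS$. Chaining these gives $\CC \equivT 1$ or $\CC \equivT \omega$.

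For the non-thin case I would first build a pre-adjunction witnessing $\omega \leT \CC$. Lemma~\ref{p-a-t.lem.nonthin-seq}(a) furnishes objects $C_0, C_1, C_2, \ldots$ with $C_i \to C_{i+1}$ and $C_{i+1} \not\to C_i$; a short observation shows that they are pairwise non-isomorphic (if $C_i \cong C_j$ for $i<j$, the isomorphism composed with $C_{i+1} \to \ldots \to C_j$ would yield $C_{i+1} \to C_i$, contradicting Lemma~\ref{p-a-t.lem.nonthin-seq}(a)). I would set $F(n) = C_n$ and, for $Y \in \Ob(\CC)$, define $H(Y) = \max\{n : C_n \to Y\}$, with $H(Y)=0$ when the set is empty. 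The key point is that this set is always finite: Lemma~\ref{p-a-t.lem.misc}(b) bounds the number of isomorphism types below $Y$, and the $C_n$ are pairwise non-isomorphic. Since $\omega$ is thin, producing $\Phi_{m,Y}$ reduces to checking that $\hom_\CC(C_m, Y) \ne \varnothing$ forces $m \le H(Y)$, which is immediate; and (PA) is then free because, for $A \le B$ in $\omega$ and any $u$, the morphism $v$ exists (as $C_A \to C_B$) and both sides of the required equality collapse to the unique morphism $A \to H(C)$ in $\omega$.

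For the non-reduction $\CC \not\leT \omega$, I would assume a pre-adjunction $F : \Ob(\CC) \rightleftarrows \Ob(\omega) : H$ existed and invoke Lemma~\ref{p-a-t.lem.card-arg} (applicable since morphisms in $\CC$ are mono by Definition~\ref{p-a-t.def.ramsey-cat-of-fin-structs}) to deduce $|\hom_\CC(A, B)| \le |\hom_\omega(F(A), F(B))| \le 1$ for all $A, B \in \Ob(\CC)$, contradicting the assumption that $\CC$ is not thin. The main obstacle I anticipate is the positive direction in the non-thin case, specifically verifying that the proposed $H$ is well-defined (i.e., that $\{n : C_n \to Y\}$ is finite for every $Y$) and checking (PA); the rest is classical Tukey theory or a one-line cardinality argument.
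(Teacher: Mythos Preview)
Your proposal is correct and follows essentially the same route as the paper: the thin case is reduced to the classical Tukey dichotomy via the skeleton and Theorem~\ref{p-a-t.thm.tukey=PA}, and in the non-thin case you use the chain from Lemma~\ref{p-a-t.lem.nonthin-seq}(a) to define $F(n)=C_n$ and $H(Y)=\max\{n:C_n\to Y\}$ (with value~$0$ if empty), finiteness coming from Lemma~\ref{p-a-t.lem.misc}(b), while the non-reduction is exactly the cardinality contradiction from Lemma~\ref{p-a-t.lem.card-arg}. Your explicit verification that the $C_n$ are pairwise non-isomorphic is a small clarification the paper leaves implicit (it instead sets $H(C_k)=k$ directly before handling the remaining objects), but the argument is otherwise the same.
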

\begin{proof}
  Let $\CC$ be a thin Ramsey category of finite objects, and let $\SS$ be its skeleton.
  Then $\SS$ is an at most countable partial order and we have that $1 \equivT \SS \equivT \CC$ or
  $\omega \equivT \SS \equivT \CC$ (Theorem~\ref{p-a-t.thm.tukey=PA} and Lemma~\ref{p-a-t.lem.SeqpaC}).

  Assume, therefore, that $\CC$ is not thin.
  By Lemma~\ref{p-a-t.lem.nonthin-seq} there is a sequence
  $C_0, C_1, C_2, \ldots \in \Ob(\CC)$ such that $C_i \to C_{i+1}$ and $C_{i+1} \not\to C_i$ for all $i \ge 0$.
  Let us construct a pre-adjunction $F : \omega \rightleftarrows \Ob(\CC) : H$.
  Let $F(k) = C_k$ for all $k \in \omega$. To define $H$ we first let $H(C_k) = k$, $k \in \omega$, and for the
  remaining objects $X \in \Ob(\CC) \setminus \{ C_k : k \in \omega \}$ we define $H(X)$ as follows:
  \begin{itemize}
    \item if $C_i \not\to X$ for all $i \in \omega$ put $H(X) = 0$;
    \item if $C_i \to X$ for some $i \in \omega$ put $H(X) = \max\{i \in \omega : C_i \to X\}$; note that $\{i \in \omega : C_i \to X\}$
          is finite because of Lemma~\ref{p-a-t.lem.misc}~$(b)$.
  \end{itemize}
  Since $\omega$ is thin, the definition of $\Phi_{k, X} : \hom_\CC(F(k), X) \to \hom_\omega(k, H(X))$ is obvious, once we ensure that
  $\hom_\omega(k, H(X)) \ne \0$ whenever $\hom_\CC(F(k), X) \ne \0$. But this is straightforward:
  if $\hom_\CC(F(k), X) \ne \0$ then $C_k \to X$ so $k \le \max\{i \in \omega : C_i \to X\} = H(X)$.
  It is also easy to see that the condition (PA) in the definition of the pre-adjunction is satisfied, so
  $\omega \leT \CC$.

  To complete the proof we still have to show that $\CC \not\leT \omega$. Suppose, to the contrary, that $\CC \leT \omega$
  and let $F : \Ob(\CC) \rightleftarrows \omega : H$ be a pre-adjunction. Since $\CC$ is not thin there exist
  $A, B \in \Ob(\CC)$ such that $|\hom_\CC(A, B)| \ge 2$. So,
  $
    |\hom_\CC(A, B)| \ge 2 > 1 \ge \hom_\omega(F(A), F(B))
  $
  because $\omega$ is thin. Contradiction with Lemma~\ref{p-a-t.lem.card-arg}.
\end{proof}

The above result agrees with the intuition that the categories where the Ramsey property is trivial
should be the weakest Ramsey categories. However, it provides no insight into the mutual relationship of
``proper'' Ramsey statements. The following result shows that $\catR$, which encodes the Finite Ramsey Theorem,
is the weakest amongst the most significant classes of categories of finite structures.

\begin{LEM}\label{p-a-t.lem.ordered}
  Let $\KK$ be a Ramsey age of finite relational structures. Then every $\calA \in \KK$ can be expanded
  by a linear order $<^A$ so that if $f$ is an embedding $\calA \hookrightarrow \calB$ then
  $f$ is an embedding $(\calA, \Boxed{<^A}) \hookrightarrow (\calB, \Boxed{<^B})$. Consequently, the classes $\KK$ and
  $\KK' = \{(\calA, \Boxed{<^A}) : \calA \in \KK\}$ are isomorphic as categories.
\end{LEM}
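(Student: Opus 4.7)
The plan is to appeal to the Kechris--Pestov--Todor\v cevi\'c correspondence recalled in Section~\ref{p-a-t.sec.prelim}. Since $\KK$ is a Ramsey age, it is a Fra\"\i ss\'e age; let $\calF = \Flim(\KK)$ with underlying set $F$, so that the topological group $\Aut(\calF)$ is extremely amenable.

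First I would produce an $\Aut(\calF)$-invariant linear order on $F$. The space $\LO(F) \subseteq 2^{F \times F}$ of all linear orderings of $F$, equipped with the product topology, is a compact Hausdorff space on which $\Aut(\calF)$ acts continuously in the obvious coordinate-wise fashion. Extreme amenability then yields a fixed point: a linear order $<^F$ such that every $\phi \in \Aut(\calF)$ is monotone with respect to $<^F$.

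Second, I would pull $<^F$ back to each $\calA \in \KK = \Age(\calF)$. Pick any embedding $\iota_\calA : \calA \hookrightarrow \calF$ and declare $a_1 <^A a_2$ iff $\iota_\calA(a_1) <^F \iota_\calA(a_2)$. Well-definedness is the only subtle point: for a second embedding $\iota' : \calA \hookrightarrow \calF$, ultrahomogeneity of $\calF$ yields a $\phi \in \Aut(\calF)$ with $\phi \circ \iota_\calA = \iota'$, and the $\Aut(\calF)$-invariance of $<^F$ forces the two pullback orders to coincide. Thus $<^A$ depends only on $\calA$.

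Embedding preservation is then immediate: given $f : \calA \hookrightarrow \calB$ in $\KK$ and any $\iota_\calB : \calB \hookrightarrow \calF$, the composite $\iota_\calB \circ f$ is an embedding $\calA \hookrightarrow \calF$, which by well-definedness may be used to compute $<^A$, whence $f$ is monotone for $<^A$ and $<^B$. The assignment $\calA \mapsto (\calA, <^A)$, acting as the identity on morphisms, is therefore a functor $\KK \to \KK'$; it is bijective on objects by definition of $\KK'$ and bijective on every hom-set (embeddings in $\KK$ are exactly embeddings in $\KK'$ with their linear orderings forgotten), and the forgetful functor $\KK' \to \KK$ is its two-sided inverse. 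The only genuinely substantive step is the combination of $\Aut(\calF)$-invariance of $<^F$ with ultrahomogeneity of $\calF$ in the well-definedness argument; everything else is bookkeeping.
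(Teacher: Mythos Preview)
Your proof is correct and follows essentially the same route as the paper's own (sketched) argument: use the KPT correspondence to get extreme amenability of $\Aut(\calF)$, act on the compact space $\LO(F)$ of linear orders to obtain an $\Aut(\calF)$-invariant order $<^F$, pull it back along embeddings into $\calF$, and invoke ultrahomogeneity for well-definedness. You have simply fleshed out the details the paper leaves implicit, including the verification that the resulting functor is an isomorphism of categories.
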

\begin{proof} (Sketch)
  Let $\calF = \Flim(\KK)$. Then $\Aut(\calF)$ is extremely amenable because $\KK$ is a Ramsey age.
  Let $\LO(F)$ be the set of all the linear orders on $F$, the base set of $\calF$, endowed with the obvious topology.
  This is a compact Hausdorff space, so the natural action (by shifts) of $\Aut(\calF)$ on $\LO(F)$, being continuous,
  has a joint fixed point. In other words, there is a linear order $<^F$ on $F$ which is invariant for every
  automorphism of~$\calF$. Take any $\calA \in \KK$ and an embedding $g : \calA \hookrightarrow \calF$,
  and define $<^A$ on $A$ by pulling $<^F$ from $F$ to $A$ along $g$.
  The ultrahomogeneity of $\calF$ ensures that $<^A$ does not depend on~$g$.
  The same argument applies to show that if $f$ is an embedding $\calA \hookrightarrow \calB$ then
  $f$ is an embedding $(\calA, \Boxed{<^A}) \hookrightarrow (\calB, \Boxed{<^B})$.
\end{proof}

Let $\KK$ be an age and $\SS$ its skeleton. We say that $\KK$ is \emph{oligomorphic} if
for each $n \in \NN$ there are only finitely many structures of size $n$ in $\SS$.

\begin{THM}
  Let $\KK$ be an oligomorphic Ramsey age of finite relational structures.
  Then $\catR \leT \KK$.
\end{THM}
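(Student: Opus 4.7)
The strategy is to construct a pre-adjunction $F : \Ob(\catR) \rightleftarrows \Ob(\KK) : H$ whose right leg $H$ is essentially the forgetful functor sending each $\KK$-structure to its underlying chain. First, by Lemma~\ref{p-a-t.lem.ordered} assume every $\calA \in \KK$ carries a canonical linear order $<^A$ preserved by every embedding; write $U : \KK \to \catR$ for the resulting faithful (in general not full) forgetful functor, and set $H(\calY) = U(\calY)$.

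The left leg $F$ will pick out Ramsey-homogeneous size-$n$ substructures of $\calF = \Flim(\KK)$. For each $n \ge 1$ colour the $<^F$-increasing $n$-tuples of $F$ by the isomorphism type of the substructure of $\calF$ they span; by oligomorphicity this colouring uses only finitely many colours. Iterating the Infinite Ramsey Theorem (Example~\ref{p-a-t.ex.1}) one $n$ at a time produces a descending chain $F = A_0 \supseteq A_1 \supseteq A_2 \supseteq \cdots$ of infinite subsets of $F$ such that, for every $n$, every $n$-element subset of $A_n$ induces a substructure of $\calF$ of one fixed isomorphism type~$\calS_n$. For each $n$ fix some $T_n \subseteq A_n$ of size $n$, set $F(n) = \restr{\calF}{T_n} \cong \calS_n$, and identify the underlying chain of $F(n)$ with $\{1,\ldots,n\}$ via $<^F$. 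Finally let $\Phi_{n,\calY}(u) = U(u)$ for every embedding $u : F(n) \to \calY$.

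To verify the pre-adjunction axiom (PA), take $f \in \hom_\catR(a, b)$ and $u \in \hom_\KK(F(b), \calY)$, and define $v : F(a) \to F(b)$ on underlying sets by sending the $i$-th element of $T_a$ to the $f(i)$-th element of $T_b$. Since $A_b \subseteq A_a$, the image $v(T_a)$ is an $a$-element subset of $A_a$, so $\restr{\calF}{v(T_a)} \cong \calS_a \cong F(a)$; ordered finite structures being rigid, the unique isomorphism between them is order-preserving and hence coincides with~$v$. Thus $v$ is an embedding in $\KK$ with $U(v) = f$, whence $\Phi_{a,\calY}(u \cdot v) = U(u) \circ f = \Phi_{b,\calY}(u) \cdot f$, which is exactly (PA), and $\catR \leT \KK$ follows. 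The main technical burden is the iterated-Ramsey step producing the nested $A_n$'s together with the uniform types $\calS_n$: oligomorphicity is essential to keep each colouring finite, while everything else falls out from the rigidity of ordered finite structures and a direct unwinding of the identifications above.
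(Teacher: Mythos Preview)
Your proposal is correct and follows essentially the same approach as the paper: invoke Lemma~\ref{p-a-t.lem.ordered} to equip every structure with a canonical linear order, use oligomorphicity plus the Infinite Ramsey Theorem iteratively to build the descending chain $A_0 \supseteq A_1 \supseteq \cdots$ of infinite sets on which all $n$-element induced substructures are isomorphic, take $F(n)$ to be such a substructure, let $H$ be the underlying-chain map, and verify (PA) by observing that any chain embedding between the chosen representatives is automatically a $\KK$-embedding. The only cosmetic difference is that the paper packages $H$ and $\Phi$ via a unique chain-isomorphism $\eta_\calC$ rather than the forgetful functor $U$, but this amounts to the same identification.
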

\begin{proof}
  Let $L$ be a relational language such that $\KK$ is an age of finite $L$-structures, and let $\SS$ be the skeleton of $\KK$.
  Lemma~\ref{p-a-t.lem.ordered} ensures that we can safely assume that there is a binary symbol $\Boxed< \in L$
  which is interpreted as a linear order in every structure in $\KK$. Let $\calF = \Flim(\KK)$.
  Let us now inductively construct a family
  $$
    A_0 \supseteq A_1 \supseteq A_2 \supseteq A_3 \supseteq \ldots
  $$
  of infinite subsets of $F$, the base set of $\calF$. To start the induction let $A_0 = F$. Assume that $A_{n-1}$ has been constructed.
  List all the $n$-elements structures in $\SS$ as $S_1, \ldots, S_{k}$ and define a $k$-coloring
  $$
    \chi : [A_{n-1}]^n \to k
  $$
  of $n$-subsets of $A_{n-1}$ so that $\chi(X) = j$ if $\restr\calF X \cong S_j$.
  By the Infinite Ramsey Theorem there is an infinite monochromatic subset $A_n \subseteq A_{n-1}$.

  Note that, by construction, for every $m, n \in \NN$ such that $m \le n$ and any pair of $m$-subsets $X, Y \in [A_n]^m$
  of $A_n$ we have that $\restr \calF X \cong \restr \calF Y$.
  Moreover,
  \begin{itemize}
    \item[(*)]
    for every $m, n \in \NN$ such that $m \le n$ and any choice of $X \in [A_m]^m$ and $Y \in [A_n]^n$
    we have that $f$ is an embedding $\restr \calF X \hookrightarrow \restr \calF Y$
    if and only if $f$ is an embedding of finite chains $(X, \Boxed{<^X}) \hookrightarrow (Y, \Boxed{<^Y})$.
  \end{itemize}
  For each $n \in \NN$ choose an $X_n \in [A_n]^n$ and
  let $\JJ$ be the category whose objects are $(X_n, \Boxed{<^{X_n}})$, $n \in \NN$,
  and whose morphisms are embeddings of finite chains. Clearly, $\JJ$ is a skeleton of $\catR$
  so $\catR \equivT \JJ$ by Lemma~\ref{p-a-t.lem.SeqpaC}. Let us show that $\JJ \leT \KK$.

  Note that for every $\calC \in \KK$ there is a unique isomorphism of chains $\eta_\calC : (C, \Boxed{<^C}) \to (X_n, \Boxed{<^{X_n}})$
  where $n = |C|$. Define $F : \Ob(\JJ) \to \Ob(\KK)$ by $F(X, \Boxed{<^X}) = \restr \calF X$ and
  $H : \Ob(\KK) \to \Ob(\JJ)$ by $H(\calC) = (X_n, \Boxed{<^{X_n}})$ where $n = |C|$, the underlying set of $\calC$.
  Finally, define
  $$
    \Phi_{X_m,\calC} : \hom_\KK(F(X_m), \calC) \to \hom_\JJ(X_m, H(\calC))
  $$
  by $\Phi_{X_m,C}(u) = \eta_\calC \cdot u$. Here we use the fact that if $u : \calB \hookrightarrow \calC$ is an embedding between
  two structures in $\KK$ then $u : (B, \Boxed{<^B}) \hookrightarrow (C, \Boxed{<^C})$ is an embedding between the corresponding chains.
  Finally, let us verify (PA). Take any $f : (X_\ell, \Boxed{<^{X_\ell}}) \hookrightarrow (X_m, \Boxed{<^{X_m}})$.
  As noted in (*) the same $f$ is an embedding $\restr \calF {X_\ell} \hookrightarrow \restr \calF {X_m}$, that is,
  $f \in \hom_\KK(F(X_\ell), F(X_m))$. Take any $\calC \in \KK$ and $u \in \hom_\KK(F(X_m), \calC)$ and note that
  $$
    \Phi_{X_\ell, \calC} (u \cdot f) = \eta_\calC \cdot u \cdot f = \Phi_{X_m,\calC}(u) \cdot f.
  $$
  This verifies (PA) and completes the proof.
\end{proof}

\section{The strongest Graham-Rothschild statement?}
\label{p-a-t.sec.GR-all-the-same}

In this section we are going to show that all the Graham-Rothschild statements are Tukey equivalent by showing that
$$
  \catD^\op \leT \GR(A, X, G) \leT \GR(\0, X, G) \leT \catD^\op,
$$
where $X = \{x_1, x_2, x_3, \ldots\}$ is a countable set of variables disjoint from~$A$.
(Recall that $\catD^\op \equivT \GR(\0, X, \{e\})$.)

In the three lemmas that follow $A$ is a finite alphabet, $X = \{x_1, x_2, x_3, \ldots\}$ is a countable set of variables
disjoint from~$A$, and $G$ is a finite group acting on $A$ from the right whose neutral element is~$e$.

\begin{LEM}
  $\GR(\0, X, \{e\}) \leT \GR(A, X, G)$.
\end{LEM}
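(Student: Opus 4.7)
The plan is to exhibit an explicit pre-adjunction $F : \Ob(\GR(\0, X, \{e\})) \rightleftarrows \Ob(\GR(A, X, G)) : H$ with the simplest possible object maps, namely $F(k) = k$ and $H(n) = n$, and to concentrate all the work in the family of hom-set maps $\Phi_{k,n} : W^n_k(A, G) \to W^n_k(\0, \{e\})$. The recipe for $\Phi_{k,n}$ must collapse the alphabet-and-group decoration of a word $u \in W^n_k(A, G)$ into a plain parameter word of the same length without disturbing its variable ``skeleton''.

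I would define $\Phi_{k,n}(u) = x_{\iota(1)} x_{\iota(2)} \cdots x_{\iota(n)}$, where $\iota(p) \in \{1, \ldots, k\}$ absorbs each alphabet letter into the adjacent variable block: if $u(p) = x_i^g$, set $\iota(p) = i$ (strip the exponent); if $u(p) \in A$, set $\iota(p) = i$ where $x_i$ is the variable of $u$ at the smallest position $> p$, with the fallback that, if no variable of $u$ lies beyond $p$, we take $\iota(p)$ to be the index of the variable of $u$ at the largest position $< p$. Since $k \ge 1$, the word $u$ contains at least one variable, so $\iota$ is well-defined. A short check confirms that $\Phi_{k,n}(u) \in W^n_k(\0, \{e\})$: every $x_i$ appears because it already appeared in $u$, and the resulting word decomposes into consecutive constant blocks whose governing variables list $x_1, \ldots, x_k$ in the same left-to-right order of first occurrence as in $u$.

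The heart of the argument is the verification of condition (PA). Given $u \in W^n_k(A, G)$ and $f \in W^k_\ell(\0, \{e\})$, I would take $v$ to be $f$ itself, viewed as a trivially $G$-decorated word in $W^k_\ell(A, G)$, and check $\Phi_{\ell, n}(u \cdot v) = \Phi_{k, n}(u) \cdot f$ position by position. Writing $\sigma : \{1, \ldots, k\} \to \{1, \ldots, \ell\}$ for the map with $f_i = x_{\sigma(i)}$: at a position $p$ where $u(p) = x_i^g$, we have $(u \cdot v)(p) = x_{\sigma(i)}^g$ and $\Phi_{k,n}(u)(p) = x_i$, so both sides produce $x_{\sigma(i)}$; at a position $p$ where $u(p) \in A$, the alphabet letter survives in $u \cdot v$, and since substitution by the all-variable word $v = f$ preserves the set of variable positions of $u$ and merely relabels each variable $x_j$ as $x_{\sigma(j)}$, the ``nearest following (or trailing preceding) variable'' rule applied to $u \cdot v$ returns $x_{\sigma(\iota(p))}$, which is precisely the value of $\Phi_{k,n}(u) \cdot f$ at $p$.

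The main obstacle is settling on the correct recipe for $\Phi$: naive alternatives such as ``replace every alphabet letter by $x_1$'' violate (PA), because substitution by $f$ would then relabel those slots to $x_{\sigma(1)}$, whereas on the other side the alphabet letter of $u \cdot v$ survives unchanged and is absorbed into whatever variable the rule selects. The ``nearest following variable, with preceding-variable fallback at the tail'' rule succeeds precisely because the reference variable chosen by the rule is itself subject to the $\sigma$-relabeling, so the two substitution orders commute on the nose.
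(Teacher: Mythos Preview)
Your construction is correct, but it is more elaborate than needed, and your dismissal of the ``naive'' alternative is mistaken. The paper's proof takes $F(n)=H(n)=n$ and $v=f$ exactly as you do, but defines $\Phi_{k,n}(u)$ by the rule you rejected: strip all exponents to $e$ and replace every letter from $A$ by $x_1$. This does satisfy (PA). At an alphabet position $p$ one gets $(\Phi_{k,n}(u)\cdot f)(p)=f_1$ on one side and, since the letter survives in $u\cdot v$, $\Phi_{\ell,n}(u\cdot v)(p)=x_1$ on the other. These agree because $f_1=x_1$ \emph{always}: a word $f\in W^k_\ell(\0,\{e\})$ has no constants available, so its first symbol is forced to be $x_1$, i.e.\ $\sigma(1)=1$. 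Your objection (``substitution by $f$ would relabel those slots to $x_{\sigma(1)}$'') overlooks this constraint.

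What your more intricate ``nearest following variable'' rule buys is robustness: it would continue to satisfy (PA) even in a hypothetical setting where the source category allowed $f_1\ne x_1$. In the present lemma that generality is not needed, and the paper's one-line recipe suffices. Both arguments are valid; the paper's is just shorter.
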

\begin{proof}
  Let us construct a pre-adjunction
  $$
    F : \Ob(\GR(\0, X, \{e\})) \rightleftarrows \Ob(\GR(A, X, G)) : H.
  $$
  Recall that $\Ob(\GR(A, X, G)) = \NN = \{1, 2, 3, \ldots\}$ and
  take $F, H : \NN \to \NN$ to be the identity $F(n) = H(n) = n$, $n \in \NN$. For $u \in W_m^n(A, G)$
  let $\Phi_{m,n}(u) \in W_m^n(\0, \{e\})$ be the $m$-parameter $n$-letter word obtained from $u$ by replacing all
  the group elements with~$e$, and replacing all the letters from $A$ with~$x_1$. This clearly establishes a family of maps
  $
    \Phi_{m, n} : W_m^n(A, G) \to W_m^n(\0, \{e\})
  $
  which satisfies the condition (PA). (In Definition~\ref{opos.def.PA}, for $f \in W_m^n(\0, \{e\})$ take $v = f \in W_m^n(A, G)$).
\end{proof}

\begin{LEM}
  $\GR(A, X, G) \leT \GR(\0, X, G)$.
\end{LEM}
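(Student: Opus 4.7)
The plan is to construct a pre-adjunction $F : \Ob(\GR(A, X, G)) \rightleftarrows \Ob(\GR(\0, X, G)) : H$ by padding each object with $|A|$ extra ``variable slots'' that play the role of the letters of $A$. Set $q = |A|$, fix an enumeration $A = \{a_1, \ldots, a_q\}$, and put $F(k) = k + q$ and $H(n) = n$.

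To define $\Phi_{k,n} : W^n_{k+q}(\0, G) \to W^n_k(A, G)$, I decode pure-variable words into words containing letters: given $u \in W^n_{k+q}(\0, G)$, form $\Phi(u)$ by rewriting each position $x_i^h$ with $1 \le i \le q$ as the letter $a_i^h \in A$ (the $G$-action applied to $a_i$, which then carries exponent $e$), and relabeling each position $x_{q+j}^h$ with $j \ge 1$ as $x_j^h$. That $\Phi(u) \in W^n_k(A, G)$ follows because the shift $q+j \mapsto j$ transports the ``each parameter occurs, first occurrence has exponent $e$, first occurrences appear in canonical order'' invariants from $u$ to $\Phi(u)$, while the letter positions automatically satisfy their exponent-$e$ constraint.

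The core task is to verify (PA). Given $u \in W^C_{B+q}(\0, G)$ and $f \in W^B_A(A, G)$, I propose $v \in W^{B+q}_{A+q}(\0, G)$ to be the concatenation $x_1\, x_2 \cdots x_q\, \tilde f$, where $\tilde f$ is the lift of $f$ obtained by rewriting each letter $a_k^e$ in $f$ as $x_k^e$ and each parameter $x_\ell^g$ in $f$ as $x_{q+\ell}^g$. A short check confirms that $v$ indeed belongs to $W^{B+q}_{A+q}(\0, G)$: the prefix pins down the first-occurrence positions of $x_1, \ldots, x_q$ at positions $1, \ldots, q$ with exponent $e$, while the first-occurrence positions and exponents of $x_{q+1}, \ldots, x_{q+A}$ are inherited from those of $x_1, \ldots, x_A$ in $f$, and the shift by $q$ safely places all of them after the prefix, preserving the canonical order.

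Finally, $\Phi(u \cdot v) = \Phi(u) \cdot f$ reduces to a position-by-position case analysis on $u_p$ with three cases: $u_p = x_i^h$ for $i \le q$ yields $a_i^h$ on both sides; $u_p = x_{q+j}^h$ with $f_j$ a letter $a_k^e$ yields $a_k^h$ on both sides; $u_p = x_{q+j}^h$ with $f_j = x_\ell^g$ yields $x_\ell^{g \cdot h}$ on both sides. The main obstacle is essentially bookkeeping --- keeping the exponent composition rule $(x_\ell^g)^h = x_\ell^{g \cdot h}$ and the letter-decoding rule $x_i^h \mapsto a_i^h$ straight across the substitution $u \cdot v$ --- together with the not-entirely-trivial observation that letters of $f$ may reintroduce parameters $x_k$ with $k \le q$ that already sit in the prefix of $v$, which is harmless but requires care when checking the first-occurrence conditions.
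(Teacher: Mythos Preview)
Your proof is correct and essentially identical to the paper's: the paper renames the variable set to $Y = A \sqcup X$ (with the elements of $A$ serving as the first $|A|$ variables) so that $\Phi(u)$ is literally $u$ reinterpreted and $v = a_1 \cdots a_t f$, which is precisely your $x_1 \cdots x_q \tilde f$ under that relabeling. Your explicit decoding rule $x_i^h \mapsto a_i^h$ (via the $G$-action, then exponent $e$) is in fact slightly more careful than the paper's terse ``$\Phi_{m,n}(u) = u$,'' which suppresses exactly this detail.
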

\begin{proof}
  Let us enumerate $A$ as $A = \{a_1, a_2, \ldots, a_t\}$, $t = |A|$,
  and let $Y = A \sqcup X = \{a_1, a_2, \ldots, a_t, x_1, x_2, \ldots \}$
  be a new set of variables where $a_1$ is the first and $x_1$ the $(t+1)$-th variable. Clearly,
  $\GR(\0, X, G) \cong \GR(\0, Y, G)$, so we shall prove the lemma by exhibiting a pre-adjunction
  $$
    F : \Ob(\GR(A, X, G)) \rightleftarrows \Ob(\GR(\0, Y, G)) : H.
  $$
  As a notational convenience let $\BB = \GR(A, X, G)$ and $\CC = \GR(\0, Y, G)$. Recall that
  $\Ob(\BB) = \Ob(\CC) = \NN$. Define $F : \NN \to \NN$ by $F(n) = t + n$ and $H : \NN \to \NN$ as $H(n) = n$, $n \in \NN$.
  Then define $\Phi_{m,n} : \hom_\CC(t+m, n) \to \hom_\BB(m, n)$ by $\Phi_{m,n}(u) = u$. This requires a comment:
  when $u$ is considered as an element of $\hom_\CC(t+m, n)$, this is an $n$-letter word over $t + m$ variables
  $\{a_1, \ldots, a_t, x_1, \ldots, x_m\}$; when the same word is considered as an element of
  $\hom_\BB(m, n)$, this is an $n$-letter word over $m$ variables $\{x_1, \ldots, x_m\}$ where
  $a_1, \ldots, a_t$ serve as ``constant letters'' from~$A$. Finally, to see that the condition
  (PA) in Definition~\ref{opos.def.PA} is satisfied, for $f \in \hom_\BB(n, m)$ take $v = a_1 a_2 \ldots a_t f \in \hom_\CC(t + m, t + n)$.
  Then $\Phi_{m, \ell}(u \cdot v) = \Phi_{n, \ell}(u) \cdot f$ follows from the fact that the first $t$ variables in $Y$ are
  $a_1, \ldots a_t$; since $u$ is a $(t + n)$-parameter word, the first $t+n$ variables from $Y$ are the parameters in $u$, so
  there is an initial segment of $u$ which contains all of the letters $a_1, \ldots, a_t$.
\end{proof}

\begin{LEM}\label{p-a-t.lem.0XG=>Dop}
  $\GR(\0, X, G) \leT \catD^\op$.
\end{LEM}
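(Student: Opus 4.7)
The plan is to construct a pre-adjunction from $\GR(\0, X, G)$ to $\GR(\0, X, \{e\})$; since the latter is Tukey equivalent to $\catD^\op$ (as recalled at the start of this section, via Example~\ref{p-a-t.ex.3} and Lemma~\ref{p-a-t.lem.SeqpaC}), transitivity of $\leT$ will give $\GR(\0, X, G) \leT \catD^\op$. The guiding intuition is to \emph{unfold} the group decoration: each parameter that can carry a $G$-exponent is simulated by $r = |G|$ distinct plain parameters, one per group element.

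Fix an enumeration $G = \{g_1, g_2, \ldots, g_r\}$ with $g_1 = e$, and for each $h \in G$ let $\sigma_h$ denote the permutation of $\{1, \ldots, r\}$ determined by $g_{\sigma_h(j)} = h \cdot g_j$. I would take $F(m) = rm$ and $H(n) = n$ on objects, and define $\Phi_{m,n} \colon W^n_{rm}(\0, \{e\}) \to W^n_m(\0, G)$ by applying, position by position, the rule $x_{r(i-1)+j} \mapsto x_i^{g_j}$. Because $g_1 = e$ and the first occurrence of $x_i$ in $\Phi_{m,n}(u)$ is inherited from the first occurrence of $x_{r(i-1)+1}$ in $u$, the ``first-occurrence exponent is $e$'' constraint holds, and the order of first appearances is preserved, so $\Phi_{m,n}(u)$ is always a valid $G$-decorated parameter word.

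Verifying condition (PA) is the substantive step. Given $u \in W^n_{rb}(\0, \{e\})$ and $f = x_{k_1}^{h_1} \cdots x_{k_b}^{h_b} \in W^b_a(\0, G)$, the proposed witness $v \in W^{rb}_{ra}(\0, \{e\})$ places, in its $i$-th block of $r$ consecutive positions, the plain parameters $x_{r(k_i-1)+\sigma_{h_i}(1)}, \ldots, x_{r(k_i-1)+\sigma_{h_i}(r)}$. I expect the main obstacle to be showing that $v$ is a valid plain parameter word. Surjectivity onto $\{x_1, \ldots, x_{ra}\}$ is immediate since $f$ uses every $x_k$ and each $\sigma_{h_i}$ is a bijection of $\{1, \ldots, r\}$. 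For the first-appearance constraint, the key observation is that if $i_k$ is the position of the first occurrence of $x_k$ in $f$ then $h_{i_k} = e$, whence $\sigma_{h_{i_k}} = \mathrm{id}$ and the $i_k$-th block of $v$ introduces $x_{r(k-1)+1}, \ldots, x_{rk}$ in exactly their natural order; combined with $i_1 < i_2 < \cdots < i_a$ (which follows from validity of $f$), this delivers the correct global ordering of first appearances in $v$.

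The equality $\Phi_{b,n}(u) \cdot f = \Phi_{a,n}(u \cdot v)$ is then a direct position-by-position verification: a position of $u$ whose value is $x_{r(i-1)+j}$ becomes $x_{k_i}^{h_i \cdot g_j}$ on the left-hand side and $x_{k_i}^{g_{\sigma_{h_i}(j)}}$ on the right-hand side, and these agree by the defining relation $g_{\sigma_h(j)} = h \cdot g_j$.
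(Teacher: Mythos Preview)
Your proof is correct and is essentially the same construction as the paper's, only phrased in terms of $\GR(\0, X, \{e\})$ rather than directly in $\catD^\op$: the paper takes $F(n) = \{1,\ldots,n\}\times G$ ordered lexicographically (which is your chain of length $r n$), defines $\Phi$ by reading off a rigid surjection as a $G$-decorated word (your rule $x_{r(i-1)+j}\mapsto x_i^{g_j}$), and for $f$ with $f(j)=(x_i,g)$ sets $v(j,h)=(i,gh)$, which under the identification $(j,g_s)\leftrightarrow r(j-1)+s$ is exactly your block-permutation formula $v(r(i-1)+j)=x_{r(k_i-1)+\sigma_{h_i}(j)}$. The only difference is the cosmetic detour through $\GR(\0,X,\{e\})\equivT\catD^\op$, which the paper absorbs by working with rigid surjections from the start.
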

\begin{proof}
  Let us fix a linear ordering of $G$ in which $e$ is the least element: $G = \{e < g_2 < \ldots g_t \}$, $t = |G|$.
  As a notational convenience let $\BB = \GR(\0, X, G)$ and $\CC = \catD^\op$. The pre-adjunction
  $F : \Ob(\BB) \rightleftarrows \Ob(\CC) : H$ is constructed as follows.

  For $n \in \Ob(\BB) = \NN$ let $F(n) = \{1, 2, \ldots, n\} \times G$ be linearly ordered so that
  $(i, g) < (j, h)$ if $i < j$, or $i = j$ and $g < h$ in~$G$. For a finite chain $C \in \Ob(\CC)$ let $H(C) = |C|$, the
  number of elements of~$C$. To complete the construction we still have to define
  $$
    \Phi_{n, C} : \hom_\CC(F(n), C) \to \hom_\BB(n, H(C)),
  $$
  where $n \in \NN$ and $C = \{c_1 < c_2 < \ldots < c_\ell \} \in \Ob(\CC)$. Take any $u \in \hom_\CC(F(n), C)$.
  Then $u$ is a rigid surjection
  $$
    u : \{c_1 < c_2 < \ldots < c_\ell \} \to \{1, 2, \ldots, n\} \times G,
  $$
  so we define $\Phi_{n, C}(u)$ to be the obvious $n$-parameter $G$-decorated $\ell$-letter word
  $$
    \Phi_{n, C}(u) : \{1 < 2 < \ldots < \ell \} \to \{x_1, x_2, \ldots, x_n\} \times G
  $$
  defined by $\Phi_{n, C}(u)(i) = (x_j, g)$ if and only if $u(c_i) = (j, g)$. It is easy to see that this definition is
  correct i.e.\ that each rigid surjection gives rise to an $n$-parameter $G$-decorated $\ell$-letter word.

  \begin{figure}
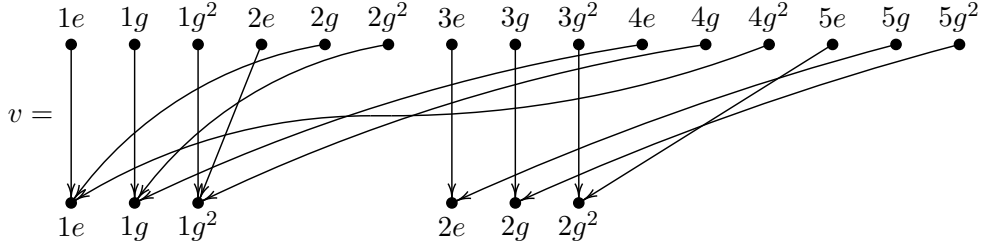

    \centering
    \input rsg.pgf
    \caption{Proof of Lemma~\ref{p-a-t.lem.0XG=>Dop}: showing by example that $v$ is a rigid surjection}
    \label{p-a-t.fig.rigid-surj}
  \end{figure}

  Let us show that this choice of $F$, $H$ and $\Phi$ satisfies (PA) from Definition~\ref{opos.def.PA}. 
  Take any $f \in \hom_\BB(m, n)$. Then $f$ is an $m$-parameter $G$-decorated $n$-letter word
  $f : \{1, 2, \ldots, n \} \to \{x_1, x_2, \ldots, x_m\} \times G$. Define
  $$
    v : \{1, 2, \ldots, n \} \times G \to \{1, 2, \ldots, m\} \times G
  $$
  so that $v(j, h) = (i, gh)$, where $f(j) = (x_i, g)$. Instead of proving formally that $v$ is a rigid
  surjection with respect to the ordering of sets of the form $F(n)$, we offer a proof by example.
  Let $G = \{e < g < g^2\}$ and $f = x_1^e x_1^{g^2} x_2^e x_1^g x_2^{g^2}$, that is,
  $$
    f = \left(\begin{array}{ccccc}
      1 & 2 & 3 & 4 & 5 \\
      x_1^e & x_1^{g^2} & x_2^e & x_1^g & x_2^{g^2}
    \end{array}\right)
  $$
  Then $v$ constructed as above is given in~Fig.~\ref{p-a-t.fig.rigid-surj}, and is clearly a rigid surjection.

  We still have to show that $\Phi_{m, C}(u \cdot v) = \Phi_{n, C}(u) \cdot f$.
  Let $u(c_k) = (j, h)$ and $f(j) = (x_i, g)$. Then $\Phi_{n,C}(u)(k) = (x_j, h)$ and $v(j, h) = (i, gh)$.
  Having in mind that $u \cdot v$ in $\CC$ is just $v \circ u$ because $\CC = \catD^\op$,
  we now have that $u \cdot v(c_k) = v \circ u(c_k) = (i, gh)$ whence
  $\Phi_{m, C}(u \cdot v)(k) = (x_i, gh) = (\Phi_{n,C}(u) \cdot f)(k)$:
    $$
      \overbrace{\ldots \ \underset{\substack{\uparrow\\k}}{x_j^h} \ \ldots}^{\Phi_{n,C}(u)}
      \quad
      \cdot
      \quad
      \overbrace{\ldots \ \underset{\substack{\uparrow\\j}}{x_i^g} \ \ldots}^{f}
      \quad
      =
      \quad
      \overbrace{\ldots \ \underset{\substack{\uparrow\\k}}{(x_i^g)^h} \ \ldots}^{\Phi_{n,C}(u) \cdot f}
    $$
  This concludes the proof.
\end{proof}

\begin{THM}
  Let $A$ be a finite alphabet, $X = \{x_1, x_2, x_3, \ldots\}$ a countable set of variables disjoint from~$A$,
  and $G$ a finite group acting on $A$ from the right. Then $\catD^\op \equivT \GR(A, X, G)$.
\end{THM}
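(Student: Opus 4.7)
The plan is essentially bookkeeping: the three lemmas immediately preceding the theorem, together with the observation recorded in Example~\ref{p-a-t.ex.3}, already assemble into a closed cycle of pre-adjunctions, and the theorem falls out by transitivity of $\leT$.

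First I would recall from Example~\ref{p-a-t.ex.3} that the skeleton of $\catD^\op$ is isomorphic (as a category) to $\GR(\0, X, \{e\})$, so by Lemma~\ref{p-a-t.lem.SeqpaC}~(c) we have $\catD^\op \equivT \GR(\0, X, \{e\})$. Concatenating this with the three lemmas of this section yields the chain
$$
  \catD^\op \;\equivT\; \GR(\0, X, \{e\}) \;\leT\; \GR(A, X, G) \;\leT\; \GR(\0, X, G) \;\leT\; \catD^\op.
$$
Since the relation ``there is a pre-adjunction from $\BB$ to $\CC$'' is transitive on locally small categories (as spelled out right after Theorem~\ref{p-a-t.thm.tukey=PA} via composition of the witnessing maps $\Phi$ and $\Psi$), the chain collapses to $\catD^\op \leT \GR(A, X, G)$ and $\GR(A, X, G) \leT \catD^\op$, which by definition means $\catD^\op \equivT \GR(A, X, G)$.

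There is no genuine obstacle at this point; the substantive combinatorial content has already been exhausted by the three lemmas. In particular the nontrivial step of the cycle is Lemma~\ref{p-a-t.lem.0XG=>Dop}, where the pre-adjunction into $\catD^\op$ is realized by expanding an $n$-letter parameter word into a rigid surjection onto $F(n) = \{1,\dots,n\} \times G$ ordered lexicographically, so that the $G$-action is absorbed into the order structure of the target chain. The remaining two lemmas are essentially free: one erases letters and group decorations, and the other promotes letters of $A$ to ``constant'' initial parameters in $Y = A \sqcup X$. Thus the theorem is a direct corollary and the only thing to write down is the above chain together with the reference to transitivity.
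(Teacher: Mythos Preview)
Your proposal is correct and follows exactly the approach the paper takes: the theorem is stated without its own proof, being the immediate consequence of the chain $\catD^\op \equivT \GR(\0, X, \{e\}) \leT \GR(A, X, G) \leT \GR(\0, X, G) \leT \catD^\op$ assembled from Example~\ref{p-a-t.ex.3} and the three preceding lemmas, together with transitivity of~$\leT$. Your summary of where the combinatorial content lies (Lemma~\ref{p-a-t.lem.0XG=>Dop}) is also accurate.
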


\section{Concluding remarks}
\label{p-a-t.sec.conclusion}

In this paper we have considered only a few examples of finite Ramsey statements
(see Table~\ref{p-a-t.fig.statements-categories}), and their relative ``Ramsey strength'' can be summarized as
follows:
$$
  \omega \ltT \left\{
  \begin{array}{rcl}
    \cline{2-2}
    \catR \leT &\multicolumn{1}{|c|}{}& \geT \catH(k) \leadsto \catRel(L)\\
    \catVec(\mathbb{F}) \leT &\multicolumn{1}{|c|}{\catD^\op}& \geT \catG \equivT \catM(\{a, b\})\\
    \GR(A, X, G) \equivT &\multicolumn{1}{|c|}{}& \geT \catP \geT \catM(S)\\
    \cline{2-2}
  \end{array}
  \right.
$$
(where $a$ and $b$ are positive reals such that $a < b \le 2a$).
This immediately raises a plethora of questions but we shall confine ourselves to only a few.

\paragraph{Problem 1.}
  We have seen that $\catR \leT \catD^\op$ (this is unsurprising and easy).
  Is it true that $\catR \ltT \catD^\op$?

\paragraph{Problem 2.}
  Is it true that the Finite Dual Ramsey Theorem is the strongest result of Ramsey theory of finite structures? More precisely,
  is the following true: if $\CC$ is a Ramsey category of finite objects which is not thin then $\CC \leT \catD^\op$?

\paragraph{Problem 3.}
  We have seen that $\catR \leT \KK$ for many significant classes $\KK$ of finite relational structures with the Ramsey property.
  Is it true that the Finite Ramsey Theorem is the weakest result of Ramsey theory of finite structures? More precisely,
  is the following true: if $\CC$ is a Ramsey category of finite objects which is not thin then $\catR \leT \CC$?

\paragraph{Problem 4.}
  Well-quasi-orders usually lead to satisfactory rough classification results.
  Is $\leT$ a well-quasi-order when restricted to Ramsey categories of finite objects?

\paragraph{Problem 5.}
  Classify modulo $\equiv_T$ all Ramsey categories of finite objects. 

\bigskip

Let us conclude the paper with a brief discussion of Tukey reducibility between $\catR$ and $\catIR$, a category
whose objects are all finite chains together with all countably infinite chains of order type~$\omega$, and whose morphisms are embeddings.
The Infinite Ramsey Theorem now takes the following form:
  for every finite chain $n \in \NN$, every $k \in \NN$ and every coloring $\chi : \hom_{\catIR}(n, \omega) \to k$
  there is a $w \in \hom_{\catIR}(\omega, \omega)$ such that $|w \circ \hom_{\catIR}(n, \omega)| = 1$.

\begin{PROP}
  The categories $\catR$ and $\catIR$ are Tukey unrelated, that is,
  $\catR \not\leT \catIR$ and $\catIR \not\leT \catR$.
\end{PROP}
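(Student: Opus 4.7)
The plan is to handle the two non-reductions separately, exploiting complementary features of the two categories: $\catIR$ has an upper bound in its object preorder but huge hom-sets out of finite objects, whereas $\catR$ is unbounded but has only finite hom-sets.

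For $\catR \not\leT \catIR$ the key observation is that $\Ob(\catIR)$, viewed as a preorder under $\to$, is bounded: any single $\omega$-chain serves as an upper bound for everything, since every finite chain embeds into $\omega$ and any two $\omega$-chains embed into each other. By contrast $\Ob(\catR)$ is unbounded, since no finite chain contains all finite chains. Supposing a pre-adjunction $F : \Ob(\catR) \rightleftarrows \Ob(\catIR) : H$ exists, Lemma~\ref{p-a-t.lem.tukey-cofinal} would give a Tukey map $F : \Ob(\catR) \to \Ob(\catIR)$. Applying the Tukey condition to the unbounded set $\Ob(\catR)$ itself would force $F(\Ob(\catR))$ to be unbounded in $\Ob(\catIR)$, contradicting the existence of a single $\omega$-chain dominating $F(\Ob(\catR))$.

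For the reverse direction $\catIR \not\leT \catR$ the natural tool is the cardinality bound in Lemma~\ref{p-a-t.lem.card-arg}. All morphisms in $\catIR$ are embeddings, hence injective maps, hence mono, so the hypothesis of that lemma is met. Assuming a pre-adjunction $F : \Ob(\catIR) \rightleftarrows \Ob(\catR) : H$, we would have $|\hom_\catR(F(A), F(B))| \ge |\hom_\catIR(A, B)|$ for all $A, B \in \Ob(\catIR)$. Choose $A$ to be the one-element chain and $B$ to be any $\omega$-chain: then $\hom_\catIR(A, B)$ is countably infinite (one embedding per element of $\omega$), while $F(A)$ and $F(B)$ are finite chains in $\catR$, so $\hom_\catR(F(A), F(B))$ is finite — contradiction.

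Neither step is truly the bottleneck; each reduces to a careful choice of objects plus an already established lemma. The only subtlety worth spelling out is the first: one must notice that the presence of \emph{any} $\omega$-chain among the objects of $\catIR$ makes the whole object preorder bounded (with that chain as maximum element in $\Ob(\catIR)/\Boxed\equiv$), so that Tukey-from-unbounded-into-bounded is impossible.
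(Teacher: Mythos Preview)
Your proof is correct and follows essentially the same approach as the paper: both directions use exactly the same lemmas (Lemma~\ref{p-a-t.lem.tukey-cofinal} for $\catR \not\leT \catIR$ via the bounded/unbounded contrast, and Lemma~\ref{p-a-t.lem.card-arg} for $\catIR \not\leT \catR$ via the finite/infinite hom-set mismatch with $A=1$ and $B=\omega$). Your write-up is slightly more detailed in verifying the hypotheses, but the argument is the same.
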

\begin{proof}
  Assume that $\catR \leT \catIR$. Then there is a pre-adjunction $F : \Ob(\catR) \rightleftarrows \Ob(\catIR) : H$.
  Lemma~\ref{p-a-t.lem.tukey-cofinal} ensures that $F$ is a Tukey map. But this is impossible because every family of objects in $\catIR$ is
  bounded by $\omega$, while 1, 2, 3, \ldots is an unbounded family in $\catR$.

  Assume, now, that $\catIR \leT \catR$ and let $F : \Ob(\catIR) \rightleftarrows \Ob(\catR) : H$ be a pre-adjunction.
  Then
  $$
  |\hom_{\catR}(F(1), F(\omega))| < |\hom_{\catIR}(1, \omega)|
  $$
  because $\hom_{\catIR}(1, \omega)$ is countably infinite and $\hom_{\catR}(F(1), F(\omega))$ is finite.
  Contradiction with Lemma~\ref{p-a-t.lem.card-arg}.
\end{proof}

This proposition shows that reasoning in terms of Tukey reducibility is not always compatible with the
proof-theoretic intuition. From the proof-theoretic point of view the Finite Ramsey Theorem is properly weaker than
the Infinite Ramsey Theorem: it is a standard fact that the Infinite Ramsey Theorem implies its finite version, while
a result due to Specker (see~\cite{specker}) implies that the Infinite Ramsey Theorem
does not follow from the Finite Ramsey Theorem. With respect to Tukey reducibility, however,
the two results are unrelated: one direction is not surprising, while the other comes from the fact that pre-adjunctions
generalize Tukey reducibility for preorders.

\section*{Acknowledgements}

The second author was supported by the Science Fund of the Republic of Serbia, Grant No.~7750027:
Set-theoretic, model-theoretic and Ramsey-theoretic phenomena in mathematical structures: similarity and diversity -- SMART.


\end{document}